\documentclass[secnum,leqno]{rims-bessatsufa}
\usepackage{amssymb, amsmath}
\usepackage{graphics}
\usepackage[dvips]{graphicx}
\usepackage{eepic}
\usepackage{epic}

\newtheorem{thm}{Theorem}[section]
\newtheorem{crl}[thm]{Corollary}

\newtheorem{lmm}[thm]{Lemma}

\theoremstyle{definition}

\theoremstyle{remark}

\title{Quenched localisation in the Bouchaud trap model with regularly varying traps}
\author{David \textsc{Croydon}\footnote{Department of Statistics, University of Warwick, Coventry, CV4 7AL, United Kingdom.\newline e-mail: \texttt{d.a.croydon@warwick.ac.uk}}
          ~and Stephen \textsc{Muirhead}\footnote{Mathematical Institute
University of Oxford, Andrew Wiles Building, Radcliffe Observatory Quarter, Oxford, OX2 6GG, United Kingdom. \endgraf e-mail: \texttt{muirhead@maths.ox.ac.uk}}}
\AuthorHead{David Croydon and Stephen Muirhead}         
\classification{60K37 (Primary) 82C44, 60G50 (Secondary)}
\support{The second author was supported by the Engineering \& Physical Sciences Research Council (EPSRC) Fellowship EP/M002896/1 held by Dmitry Belyaev.}
\keywords{\textit{Random walk in random environment, Bouchaud trap model, localisation, regularly varying tail.}}         
%
%
\VolumeNo{x}           
\YearNo{201x}           
\PagesNo{000--000}      
%
%
%
%
%

\begin{document}

\maketitle

\begin{abstract}
This article describes the quenched localisation behaviour of the Bouchaud trap model on the integers with regularly varying traps. In particular, it establishes that for almost every trapping landscape there exist arbitrarily large times at which the system is highly localised on one site, and also arbitrarily large times at which the system is completely delocalised.
\end{abstract}

\section{Introduction}

The Bouchaud trap model (BTM) was introduced in \cite{Bou92} as a simple way of investigating the evolution of a physical system -- particularly a spin glass -- through a sequence of meta-stable states. A distinctive feature of the systems considered by Bouchaud is that they exhibit the phenomena of \emph{ageing}, meaning that we can tell how long the system has been running by observing its present state. For one-dimensional versions of the model, which were first studied in \cite{FIN99} (in \cite{Bou92} the BTM was studied on the complete graph), the limited number of accessible sites means the property of ageing is intrinsically related to \emph{localisation}, namely that at certain times we can predict with high probability the state of the system (for further background, see \cite{BB03, FIN02}). It is the goal of this article to study this latter property in the special case of the BTM on the integers with regularly varying traps.

We now introduce the model of study, following the notation of \cite{CM15}. First, define a trapping landscape $\sigma=(\sigma_x)_{x \in \mathbb{Z}}$, which is a collection of independent and identically distributed (i.i.d.)\ strictly-positive random variables, built on a probability space with probability measure $\mathbf{P}$. Conditional on $\sigma$, the dynamics of the BTM are given by a continuous-time $\mathbb{Z}$-valued Markov chain $X=(X_t)_{t\geq 0}$, started from the origin, with transition rates
\begin{equation}\label{trates}
 w_{x \to y} = \begin{cases}
 \frac{1}{2 \sigma_x}, & \text{if }  y \sim x, \\
 0, & \text{otherwise,}
\end{cases}
\end{equation}
where $y \sim x$ means that $x$ and $y$ are nearest neighbours in $\mathbb{Z}$. We denote the law of $X$ conditional on $\sigma$, the so-called `quenched' law of the BTM, by $P_\sigma$. Throughout the article we will suppose that the trap distribution $\sigma_0$ satisfies
\begin{equation}\label{regvar}
\mathbf{P}\left(\sigma_0\geq u\right)= u^{-\alpha}, \qquad\forall u\geq 1,
\end{equation}
for some constant $\alpha\in(0,1]$. Whilst this is a strict assumption, we believe that, after making suitable adaptations to the arguments, one could deduce the same results under certain weaker conditions. For instance, in the case $\alpha \in (0, 1)$, it should be sufficient that the tail of the distribution of $\sigma_0$ be regularly varying with the same index. We exclude the parameter range $\alpha > 1$ since in this regime the BTM does not exhibit localisation, as explained below.

Specifically, the aim of this article is to establish the following quenched localisation behaviour of the BTM on the integers with regularly varying traps. On the one hand, we prove that for almost every trapping landscape there are arbitrarily large times at which the system is highly localised on one site, a site that can be described explicitly in terms of the trapping landscape. Conversely, we show that for almost every trapping landscape there are also arbitrarily large times at which the BTM is completely delocalised, i.e.\ no single site carries a prescribed amount of probability.

\begin{thm}\label{mainthm} For the BTM on the integers with a trapping distribution satisfying \eqref{regvar} for some $\alpha\in(0,1]$, it $\mathbf{P}$-a.s.\ holds that
\begin{align*}
\liminf_{t\rightarrow \infty}\sup_{x\in\mathbb{Z}}P_\sigma\left(X_t=x\right)=0,\qquad \limsup_{t\rightarrow \infty}\sup_{x\in\mathbb{Z}}P_\sigma\left(X_t=x\right)=1.
\end{align*}
\end{thm}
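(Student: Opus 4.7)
The proof naturally splits into two parts, corresponding to two different exceptional behaviours of the environment. Both rely crucially on the independence of $(\sigma_x)_{x \in \mathbb{Z}}$ together with the heavy tail \eqref{regvar}, via Borel--Cantelli arguments.

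\textbf{Localisation} ($\limsup = 1$): I would construct, $\mathbf{P}$-a.s., a sequence of sites $x_n \to \infty$ such that $\sigma_{x_n}$ is enormously larger than every other trap within a window of radius $R_n$ around $x_n$, for some $R_n \to \infty$ chosen large enough that the origin is contained in such a window for large $n$. Such exceptional sites occur infinitely often by Borel--Cantelli in the divergent direction, using \eqref{regvar} and independence. The standard formula for mean hitting times of nearest-neighbour chains on $\mathbb{Z}$ then bounds $T_{x_n}$, the hitting time of $x_n$, in terms of $R_n^2 \cdot \sup_{|y-x_n|\leq R_n,\, y \neq x_n} \sigma_y$, which is by construction $o(\sigma_{x_n})$. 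Once $X$ reaches $x_n$ it remains there for an $\mathrm{Exp}(1/\sigma_{x_n})$-distributed holding time, so choosing $t_n = T_{x_n} + \sigma_{x_n}/\log n$ and applying a Markov-type estimate gives $P_\sigma(X_{t_n} = x_n) \geq P_\sigma(T_{x_n} \leq t_n)(1 - 1/\log n) \to 1$.

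\textbf{Delocalisation} ($\liminf = 0$): I would construct, $\mathbf{P}$-a.s., intervals $I_n$ of growing length $L_n$ containing $k_n \to \infty$ traps of mutually comparable depth (all in some scale $[S_n, 2S_n]$), whose other traps have depth $O(1)$, and such that no trap of depth comparable to $S_n$ lies within distance $\gg L_n$ of $I_n$. Suitable parameters $(L_n, k_n, S_n)$ make such patterns rare enough to decouple across $n$ yet common enough to occur infinitely often, again by Borel--Cantelli. The BTM restricted to $I_n$ (with reflecting boundary, say) is reversible with invariant measure $\pi(y) \propto \sigma_y$, approximately uniform on the $k_n$ deep traps with mass $\sim 1/k_n$ each. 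Using conductance or eigenvalue bounds, mixing inside $I_n$ occurs on a timescale $\tau^{\mathrm{mix}}_n$ much shorter than the time required to escape into the surrounding landscape (which contains no comparable traps nearby). At an intermediate time $t_n$ the distribution of $X_{t_n}$ is thus close to $\pi$ in total variation, yielding $\sup_y P_\sigma(X_{t_n}=y) \leq C/k_n \to 0$.

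The delocalisation argument is the more delicate of the two. One must simultaneously: produce the right exceptional landscape pattern and establish its almost-sure occurrence via a careful second-moment or decoupling step in Borel--Cantelli; obtain a quantitative mixing bound for the BTM on $I_n$ with a random-depth environment; and verify that the walker has not yet escaped $I_n$ on the relevant time scale. Balancing the three parameters $L_n$, $S_n$ and $k_n$ against the fluctuations of the surrounding landscape is the principal technical task.
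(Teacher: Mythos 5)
Your overall architecture (favourable landscape configurations shown to occur infinitely often via a divergent Borel--Cantelli argument over independent scales, followed by a quenched estimate valid on each such configuration) is exactly that of the paper. However, both of your quenched mechanisms contain genuine gaps.

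For the $\limsup$ part, the ``reach the deep trap and never leave it'' strategy cannot yield probability close to $1$. To get $P_\sigma(X_{t_n}=x_n)\geq P_\sigma(T_{x_n}\leq t_n)(1-o(1))$ from the exponential holding time alone you need both $P_\sigma(T_{x_n}\leq t_n)\to 1$ and $t_n=o(\sigma_{x_n})$ (note also that $t_n=T_{x_n}+\sigma_{x_n}/\log n$ is a random time, whereas the theorem concerns deterministic times). But with probability bounded away from $0$ the walk makes order $x_n$ visits to the deepest trap in $(0,x_n/2)$ --- typically of depth $\asymp x_n^{1/\alpha}$ --- before first hitting $x_n$, so $T_{x_n}\gtrsim x_n^{1+1/\alpha}$ with non-negligible probability; forcing $P_\sigma(T_{x_n}\leq t_n)$ close to $1$ therefore forces $\sigma_{x_n}\gg x_n^{1+1/\alpha}$, and since $\sum_x \mathbf{P}(\sigma_x\geq x^{1+1/\alpha})=\sum_x x^{-1-\alpha}<\infty$, such traps occur only finitely often $\mathbf{P}$-a.s. (Already $\sigma_x\geq x^2$, needed just to beat the diffusive crossing time, fails eventually when $\alpha>1/2$.) In the paper the localisation time $t_n\asymp a_n^{1+1/\alpha}\ell_\alpha(a_n)$ is in fact \emph{much larger} than $\sigma_{x_n}\asymp a_n^{1/\alpha}\ell_\alpha(a_n)$: the walker leaves and re-enters the deep trap many times, and localisation holds because the chain confined to the window $[-b_n,b_n]$ has equilibrated, $x_n$ carries stationary mass $\sigma_{x_n}/\bar S_{b_n}\geq(1+3\varepsilon)^{-1}$, and $s\mapsto \hat P^{x}_\sigma(\hat X_s=x)$ is monotone, so this stationary value is a lower bound at all times prior to exiting the window (Lemma \ref{locallem}). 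This is precisely why the favourable event must control the \emph{sum} of all other traps in the window relative to $\sigma_{x_n}$ --- a constant-probability event at each scale --- and not merely the maximum.

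For the $\liminf$ part your mechanism (mixing to a near-uniform measure on $k_n\to\infty$ deep traps of comparable depth) differs from the paper's and, as specified, cannot be balanced. Requiring every one of the $L_n-k_n$ remaining traps in $I_n$ to have depth $O(1)$ costs probability $(1-c)^{L_n}=e^{-cL_n}$; with $k_n\leq L_n\to\infty$ these probabilities are summable for any choice of disjoint (hence independent) intervals that the walk can still reach and remain localised in, so the divergent Borel--Cantelli step fails. The paper requires no deep traps at all: it asks only that in each of $2K+1$ blocks of length $a_n$ around the origin the block sum is of the typical order $a_n^{1/\alpha}\ell_\alpha(a_n)$ while the block maximum is at most $\varepsilon$ times this --- a constant-probability event per block --- and then combines the on-diagonal heat-kernel bound $p^\sigma_{t_n}(x,x)\lesssim 1/V(x,a_n)$ with Cauchy--Schwarz and $P_\sigma(X_t=x)=\sigma_x\, p^\sigma_t(0,x)$ to obtain $\sup_x P_\sigma(X_{t_n}=x)<4\varepsilon$ inside the blocks, plus a confinement estimate for sites outside. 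Relaxing your ``depth $O(1)$'' to ``no single trap exceeds an $\varepsilon$-fraction of the block sum'' essentially recovers the paper's condition, but you would still need a quantitative substitute for your mixing bound; the heat-kernel route avoids proving any mixing statement.
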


We now compare this result with previous studies of the quenched behaviour of the BTM that have appeared in the literature. In \cite{FIN99} (see also \cite{FIN02}) it was established that, for $\alpha \in (0, 1)$, the probability mass function of the BTM exhibits quenched localisation, in the sense that
\[\sup_{x \in \mathbb{Z}} P_{\sigma}\left(X_t=x\right) \not \rightarrow 0 \]
as $t \to \infty$. Theorem \ref{mainthm} strengthens the above localisation result by demonstrating that the supremum (indeed the $\ell_p$-norm, for any $p > 1$) of the probability mass function of the BTM fluctuates infinitely often between the bounds of $0$ and $1$.

One interesting consequence of Theorem \ref{mainthm} is to demonstrate a relatively sharp transition in the quenched behaviour of the BTM between the homogenised regime and a regime of strong disorder, as the tail of the trap distribution gets heavier. Recall that if $\sigma_0$ has finite mean (and in particular if \eqref{regvar} holds for some $\alpha > 1$), the BTM homogenises $\mathbf{P}$-a.s. In other words the BTM, rescaled diffusively, converges in distribution to Brownian motion $\mathbf{P}$-a.s. In Section \ref{sec:bm} we check that this, in turn, implies that, $\mathbf{P}$-a.s., as $t \to \infty$,
\begin{equation}\label{bmdeloc}
\sup_{x\in\mathbb{Z}}P_\sigma\left(X_t=x\right) \to 0.
\end{equation}
By contrast, the $\limsup$ part of Theorem \ref{mainthm} demonstrates that, if $\alpha \le 1$, there are arbitrarily large times, $\mathbf{P}$-a.s., at which the probability mass function of the BTM is in a maximally disordered state. Whether this remains true for \textit{any} $\sigma_0$ with infinite mean (perhaps under suitable regularity conditions) is an interesting question which we leave open.

We also note that the recent work \cite{CM15} investigated the result corresponding to Theorem \ref{mainthm} in the case of $\sigma_0$ with a slowly varying tail at infinity (roughly this is the $\alpha =0$ case; see \cite[Theorem 1.9]{CMspa}). In particular, it was demonstrated that in the one-sided case (i.e.\ restricting the BTM to the positive integers), there exist distributions of $\sigma_0$ such that
\[\liminf_{t\rightarrow \infty}\sup_{x\in\mathbb{Z}^+}P_\sigma\left(X_t=x\right)=\frac1N,\qquad\limsup_{t\rightarrow \infty}\sup_{x\in\mathbb{Z}^+}P_\sigma\left(X_t=x\right)=1 \]
for each $N \in \{2, 3, \ldots\}$. We suspect that the equivalent result also holds true in the two-sided case, for $N$ restricted to $\{3, 4, \ldots\}$. Whether there exist trap distributions $\sigma_0$ for which
\[\liminf_{t\rightarrow \infty}\sup_{x\in\mathbb{Z}}P_\sigma\left(X_t=x\right) = p \]
for arbitrary $p \in [0, 1/3]$ (or $p \in [0, 1/2]$ in the one-sided case) is also an interesting open question.

Our approach to establishing the $\limsup$ part of Theorem \ref{mainthm}, which we do in Section \ref{sec:comloc}, largely follows the argument in \cite{CM15}. On the other hand, our argument for the $\liminf$ part in Section \ref{sec:comdeloc} makes use of heat-kernel estimates, which is quite different from the approach taken for the equivalent bounds in \cite{CM15}.

Finally, we remark that as a by-product of our argument we establish bounds on the almost sure fluctuations in the max/sum ratio of i.i.d.\ sequences of random variables with common distribution $\sigma_0$. Again, one might compare to the integrable case, in which the $\limsup$ will tend to zero, and the slowly varying case, in which the $\liminf$ can be positive.

\begin{thm} \label{summax}
Assume $\sigma_0$ satisfies \eqref{regvar} for some $\alpha\in(0,1]$. Denote by $M_n$ and $S_n$ the maximum and sum respectively of the partial sequence $(\sigma_i)_{1 \le i \le n}$
\[    M_n := \max_{1 \le i \le n} \sigma_i , \qquad S_n := \sum_{1 \le i \le n} \sigma_i  .\]
Then it $\mathbf{P}$-a.s.\ holds that
\[\liminf_{n\rightarrow \infty}\frac{M_n}{S_n}=0, \qquad \limsup_{n\rightarrow \infty}\frac{M_n}{S_n}= 1.\]
\end{thm}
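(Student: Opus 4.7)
The plan is to handle each of the two claims by a Borel--Cantelli argument along a suitably chosen block decomposition or subsequence.

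For the $\limsup=1$ direction, I partition $\mathbb{N}$ into dyadic blocks $I_k:=(2^{k-1},2^k]$ and set the ``typical scale'' $a_n:=n^{1/\alpha}$ if $\alpha\in(0,1)$, or $a_n:=n\log n$ if $\alpha=1$. For a slowly-divergent sequence $L_k\to\infty$ (concretely, $L_k=(\log k)^{1/\alpha}$ for $\alpha<1$ and $L_k=\log k$ for $\alpha=1$) I consider the events
\[ F_k := \Big\{ \max_{i\in I_k}\sigma_i \geq L_k\, a_{2^k}\Big\}. \]
By \eqref{regvar}, $\mathbf{P}(F_k)\sim |I_k|(L_k a_{2^k})^{-\alpha}$, and the parameters are chosen so this is summably divergent; since the $F_k$ are independent (depending on disjoint trap indices), the Borel--Cantelli lemma gives $F_k$ infinitely often $\mathbf{P}$-a.s. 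On $F_k$, an a.s.\ upper bound of the form $S_{2^{k-1}}=O(a_{2^k})$ (derived from a simple truncation of the subsequence $(2^k)$), combined with a Markov's-inequality control on the conditional mean of the sum of the other traps in $I_k$ (which is of order $L_k^{1-\alpha}a_{2^k}$ for $\alpha<1$, and $a_{2^k}$ for $\alpha=1$), implies that with conditional probability bounded below on $F_k$ the single large trap dominates $S_{2^k}$, so that $M_{2^k}/S_{2^k}\geq 1-O(L_k^{-\alpha})\to 1$. The refined events remain independent and summably divergent across $k$, so occur infinitely often.

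For the $\liminf=0$ direction, I use the record sequence $\tau_1:=1$, $\tau_{k+1}:=\min\{n>\tau_k:\sigma_n>M_k\}$, $M_k:=\sigma_{\tau_k}$. Conditional on $\mathcal{F}_{\tau_k}$, the inter-record gap $\tau_{k+1}-\tau_k$ is geometric with success probability $M_k^{-\alpha}$, so the events $A_k:=\{\tau_{k+1}-\tau_k\geq (\log k) M_k^{\alpha}\}$ satisfy $\mathbf{P}(A_k\mid\mathcal{F}_{\tau_k})\approx e^{-\log k}=1/k$. Since $A_k\in\mathcal{F}_{\tau_{k+1}}$, the conditional Borel--Cantelli lemma yields $A_k$ infinitely often a.s. Inside such a long waiting period, the conditionally-truncated traps $\sigma_i$ (all below $M_k$) have conditional mean of order $M_k^{1-\alpha}$ for $\alpha<1$ (respectively $\log M_k$ for $\alpha=1$), and a second-moment estimate gives concentration: the accumulated sum exceeds $\tfrac{1}{2}(\log k) M_k$ at the end of the wait with high conditional probability. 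Since $M_n$ is still $M_k$, this yields $M_n/S_n=O(1/\log k)\to 0$ along a subsequence of times.

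The main obstacle is the quantitative balancing in the $\limsup$ argument: $L_k$ must be large enough that the big trap genuinely dominates $S_{2^k}$, yet small enough to keep $\mathbf{P}(F_k)$ non-summable. This is tight precisely because the tail index is exactly $\alpha\leq 1$, so $S_n$ is non-integrable and only controlled almost surely up to logarithmic factors; the $\alpha=1$ case is further delicate because the typical scales of $M_n$ (namely $n$) and $S_n$ (namely $n\log n$) differ, so $M_n/S_n\to 0$ in probability and the $\limsup=1$ assertion rests entirely on rare large-deviation events rather than any distributional limit.
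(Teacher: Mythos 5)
Your proposal splits into two halves of very different status, so let me take them in turn.

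The $\limsup=1$ half contains a genuine gap: the asserted ``a.s.\ upper bound of the form $S_{2^{k-1}}=O(a_{2^k})$'' is false, and the failure is not repairable within your dyadic set-up. Since $\sum_n\mathbf{P}\left(\sigma_n>Cn^{1/\alpha}\ell_\alpha(n)\right)=\infty$ for every $C$, the second Borel--Cantelli lemma gives $\limsup_n M_n/(n^{1/\alpha}\ell_\alpha(n))=\infty$ a.s., hence $\limsup_n S_n/(n^{1/\alpha}\ell_\alpha(n))=\infty$ a.s.; passing to the dyadic subsequence does not help because $a_{2^{k-1}}/a_{2^k}$ is bounded below (the block maxima $M(I_{k-1})$ alone already exceed $L_k a_{2^k}$ infinitely often whenever $\sum_k L_k^{-\alpha}=\infty$). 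In fact the dichotomy is exact: $S_{2^{k-1}}\le L_k a_{2^k}$ holds eventually a.s.\ if and only if $\sum_k L_k^{-\alpha}<\infty$, which is precisely the condition that makes $\sum_k\mathbf{P}(F_k)<\infty$ and destroys your second Borel--Cantelli step. You could instead aim only for the joint event $F_k\cap\{S_{2^{k-1}}\le Ca_{2^k}\}$ infinitely often, but those events are not independent across $k$ (the prefixes are nested) and you supply no substitute such as Kochen--Stone. The paper resolves exactly this tension by sparsifying the scales: with $a_n=\lfloor e^{2n\log n}\rfloor$ one has $a_{n-1}/a_n\sim n^{-2}$ summable, a Laplace-transform/Markov bound gives $\mathbf{P}\left(\bar S_{b_{n-1}}>a_n^{1/\alpha}\ell_\alpha(a_n)\right)\lesssim n^{-2}$, so the prefix is \emph{eventually} a.s.\ negligible, while the independent block events $\mathcal{A}_n$ keep probabilities of order $1$ (order $(\log a_n)^{-1}$ when $\alpha=1$, still non-summable). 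Some such sparsification is needed to close your argument.

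The $\liminf=0$ half is, by contrast, a genuinely different and essentially sound route. The paper obtains this direction from the functional convergence $n^{-1/\alpha}\ell_\alpha(n)^{-1}S_{\lfloor nt\rfloor}\Rightarrow\Xi_\alpha(t)$ in the $J_1$ topology together with continuity of the maximal-jump functional, applied to independent sparse blocks; your record-based argument (geometric inter-record gaps, L\'evy's extension of Borel--Cantelli for the events $A_k\in\mathcal{F}_{\tau_{k+1}}$, and a Chebyshev bound for the sum of traps conditioned below the current record) is more elementary and avoids stable limit theory altogether. The details you leave implicit are routine but should be checked: that the record values satisfy $M_k^{\alpha}\gg\log k$ a.s.\ so that $(1-M_k^{-\alpha})^{(\log k)M_k^{\alpha}}$ really is of order $k^{-1}$, and that the concentration event is incorporated into the conditional Borel--Cantelli scheme. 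The paper's heavier machinery is not wasted, though: the same block events $\mathcal{E}_n$ of its Section 3 must simultaneously bound $S(I_{n,k})$ above and below on $2K+1$ blocks for the delocalisation of the BTM itself, which your record construction would not deliver.
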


\section{Localisation on a single point}
\label{sec:comloc}

The aim of this section is to prove that at arbitrarily large times the BTM is highly localised, that is, to prove the $\limsup$ part of Theorem \ref{mainthm}. Our approach is to show that certain favourable configurations of the trapping landscape occur infinitely often $\mathbf{P}$-a.s., and moreover, when such favourable configurations arise, the BTM is highly localised on a single site at a certain time.

We first introduce an $\varepsilon\in(0,1)$ that will act as our error threshold. Unless explicitly stated, $\varepsilon$ will remain fixed throughout this section, and as such we shall suppress the explicit dependence of other notation on $\varepsilon$. To define the favourable configurations, we introduce the scales, for $n \in \mathbb{N}$,
\begin{equation}\label{andef}
a_n:=\lfloor e^{2 n \log n}\rfloor,\qquad b_n:=  \lceil \varepsilon^{-1} a_n \rceil.
\end{equation}
Note that we have chosen $a_n$ specifically so that the ratio $a_{n-1} / a_n \sim  n^{-2}  \to 0$ is a summable sequence. Further, recalling the notation for the one-sided maximum and sum processes $M_n$ and $S_n$ from the statement of Theorem \ref{summax}, we introduce the two-sided sum process
\[   \bar S_n := \sum_{-n \le i \le n} \sigma_i , \]
and define the events, for $n \in \mathbb{N}$,
\[\mathcal{E}_n:=\left\{M_{a_n}>\varepsilon^{-2}a_n^{1/\alpha} \ell_\alpha(a_n),\: \bar S_{b_n}  - M_{a_n} < 3 \varepsilon^{-1} a_n^{1/\alpha} \ell_\alpha(a_n)  \right\},\]
where
\[ \ell_\alpha(n) := \begin{cases}
1, & \text{if } \alpha\in(0,1), \\
\log n, & \text{if } \alpha=1,
\end{cases} \]
is a logarithmic correction in the case $\alpha = 1$.

We show that, for each $\varepsilon\in(0,1)$, the events $\mathcal{E}_n$ occur infinitely often $\mathbf{P}$-a.s. (see Lemma \ref{enlem}). Moreover, we show that, when the event $\mathcal{E}_n$ occurs, at the time
\[t_n:= \varepsilon^{-2} a_n^{1+1/\alpha} \ell_\alpha(a_n) \]
the BTM is completely localised, up to an $\varepsilon$-dependent error, on the site achieving the maximum $M_{a_n}$ (see Corollary \ref{crllocal}), a site that we shall denote by
\[x_n:=\mathrm{arg\:max}_{ 1 \le i \le a_n} \sigma_{i}.\]

To establish that the events $\mathcal{E}_n$ occur infinitely often, we start by proving the corresponding result for a closely related sequence of independent events. In particular, define the collections of intervals $(I_n)_{n\geq 1}$ and $(J_n)_{n\geq 1}$ by setting $I_1 := (0,a_1]$, $J_1 := [-b_1, 0] \cup (a_1,b_1] $, and, for $n\geq 2$,
\[I_n:=(b_{n-1},a_n],\qquad J_n:=[-b_n, -b_{n-1}) \cup (a_n,b_n] . \]
Note that $\cup_{n=1}^\infty (I_n\cup J_n)=\mathbb{Z}$, and also $I_n,J_n$, $n\geq 1$, are all disjoint. For a subset $I\subseteq \mathbb{Z}$, we write
\[S(I):=\sum_{i\in I}\sigma_i,\qquad M(I):=\max_{i\in I}\sigma_i.\]
Then define the events
\[\mathcal{A}_n:=\left\{M(I_n)>\varepsilon^{-2} a_n^{1/\alpha} \ell_\alpha(a_n),\: S(I_n\cup J_n)-M(I_n)< 2 \varepsilon^{-1} a_n^{1/\alpha} \ell_\alpha(a_n)\right\}.\]
Importantly, we observe that the disjointness of the intervals involved in their definition readily yields that these events are independent. We use this fact in the proof of the following result.

\begin{lmm} \label{anlem} As $n \to \infty$, it $\mathbf{P}$-a.s.\ holds that $\mathcal{A}_n$ occurs infinitely often.
\end{lmm}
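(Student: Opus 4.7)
The events $\mathcal{A}_n$ are measurable with respect to the pairwise disjoint collections $\{\sigma_i: i \in I_n \cup J_n\}_{n\geq 1}$ and are therefore mutually independent; the second Borel--Cantelli lemma thus reduces the claim to showing $\sum_n \mathbf{P}(\mathcal{A}_n) = \infty$. My target is the quantitative lower bound
\[\mathbf{P}(\mathcal{A}_n) \geq c(\varepsilon)\, \ell_\alpha(a_n)^{-1} \qquad \text{for all sufficiently large } n,\]
which suffices: trivially for $\alpha \in (0,1)$, and for $\alpha = 1$ because $a_n = \lfloor e^{2n\log n}\rfloor$ gives $\log a_n \sim 2n\log n$ and hence $\sum_n 1/\log a_n = \infty$.

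Setting $m_n := \varepsilon^{-2} a_n^{1/\alpha}\ell_\alpha(a_n)$ and $s_n := 2\varepsilon^{-1} a_n^{1/\alpha}\ell_\alpha(a_n)$, I decompose $\mathcal{A}_n = \mathcal{A}_n^{(1)} \cap \mathcal{A}_n^{(2)}$ into the max-event $\{M(I_n) > m_n\}$ and the sum-event $\{S(I_n \cup J_n) - M(I_n) < s_n\}$. Since $|I_n|/a_n \to 1$, \eqref{regvar} yields
\[\mathbf{P}(\mathcal{A}_n^{(1)}) = 1 - (1 - m_n^{-\alpha})^{|I_n|},\]
which converges to $1 - e^{-\varepsilon^{2\alpha}} > 0$ if $\alpha \in (0,1)$ and is $\sim \varepsilon^2/\log a_n$ if $\alpha = 1$; in both cases this is at least a positive constant multiple of $\ell_\alpha(a_n)^{-1}$. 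It remains to show $\mathbf{P}(\mathcal{A}_n^{(2)} \mid \mathcal{A}_n^{(1)}) \geq c'(\varepsilon) > 0$ uniformly in $n$.

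For $\alpha \in (0,1)$ I would appeal to the classical Poisson limit for i.i.d.\ regularly-varying variables: the rescaled point process $\sum_{i \in I_n \cup J_n} \delta_{a_n^{-1/\alpha}\sigma_i}$ converges vaguely in distribution to a Poisson point process $\Xi$ on $(0, \infty)$ of intensity $2\varepsilon^{-1}\alpha u^{-\alpha-1}\,du$, jointly with the $I$/$J$-labelling which behaves asymptotically as an independent Bernoulli thinning of parameter $|I_n|/|I_n \cup J_n| \to \varepsilon/2$. Since $\alpha < 1$ guarantees that the total mass $S(\Xi)$ is a.s.\ finite, the Portmanteau theorem yields
\[\liminf_n \mathbf{P}(\mathcal{A}_n) \geq \mathbf{P}\bigl(M^* > \varepsilon^{-2},\ S(\Xi) - M^* < 2\varepsilon^{-1}\bigr),\]
where $M^*$ is the maximum of the ``$I$''-labelled points of $\Xi$; the right-hand side is strictly positive, since one may realise $M^* \in (\varepsilon^{-2}, 2\varepsilon^{-2})$ with the remaining points of $\Xi$ carrying small total mass, a positive-probability event under the Poisson law.

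For the boundary case $\alpha = 1$ a concentration argument is required. Conditional on $M(I_n) > m_n$, with probability $1 - O(|J_n|/m_n) = 1 - O(1/\log a_n)$ no $\sigma_j$ in $J_n$ exceeds $m_n$, and then the other $|I_n \cup J_n| - 1$ variables act like i.i.d.\ truncated copies of $\sigma_0 \wedge m_n$. Computing $\mathbf{E}[\sigma_0 \wedge m_n] = 1 + \log m_n = \log a_n + \log\log a_n - 2\log\varepsilon + O(1)$, the expected truncated sum is $s_n + O(\varepsilon^{-1} a_n \log\log a_n)$, while its variance is $O(|I_n \cup J_n| m_n) = O(\varepsilon^{-3} a_n^2 \log a_n)$; the resulting standard deviation $O(\varepsilon^{-3/2} a_n \sqrt{\log a_n})$ dominates the $O(\varepsilon^{-1} a_n \log\log a_n)$ gap to $s_n$, so the central limit theorem for the triangular array gives $\mathbf{P}(\mathcal{A}_n^{(2)} \mid \mathcal{A}_n^{(1)}) \to 1/2$. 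The hardest step will be this $\alpha = 1$ concentration: one must track the subleading $\log\log a_n$ correction to the truncated mean and verify that it stays below the fluctuation scale, and justify the triangular-array CLT for these heavy-tailed truncated variables. The $\alpha \in (0,1)$ case, by contrast, reduces to a Portmanteau/continuous-mapping application once the standard Poisson convergence is in hand.
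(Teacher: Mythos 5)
Your overall architecture (independence of the $\mathcal{A}_n$, second Borel--Cantelli, and the target bound $\mathbf{P}(\mathcal{A}_n)\geq c(\varepsilon)\ell_\alpha(a_n)^{-1}$, which is summable-divergent since $\log a_n\sim 2n\log n$) matches the paper exactly. For $\alpha\in(0,1)$ your point-process route is workable but heavier than necessary: you would still need to justify that the total-mass functional passes to the limit (it is not vaguely continuous, so the contribution of points below level $\delta$ must be controlled before applying Portmanteau), and you must handle the joint law of the $I$/$J$ labels. The paper avoids all of this by decomposing over \emph{which} site attains the maximum, factoring by independence into $(a_n-b_{n-1})\mathbf{P}(\sigma_0>\varepsilon^{-2}a_n^{1/\alpha}\ell_\alpha(a_n))\cdot\mathbf{P}(S_{2b_n}<2\varepsilon^{-1}a_n^{1/\alpha}\ell_\alpha(a_n))$, and then quoting the one-dimensional stable limit theorem for $S_{2b_n}$. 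That decomposition would also repair the loose step in your conditioning, where you assert that given $\mathcal{A}_n^{(1)}$ the remaining variables ``act like'' i.i.d.\ truncated copies; this is not their exact conditional law, but becomes irrelevant once one conditions on the location of the argmax and simply enlarges the residual sum to $S_{2b_n}$.

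The genuine gap is in your $\alpha=1$ step. The Gaussian CLT you invoke for the truncated triangular array fails: the truncation level $m_n=\varepsilon^{-2}a_n\log a_n$ vastly exceeds the number of summands $N\approx 2\varepsilon^{-1}a_n$, which is the natural scale of the summands, so the Lindeberg condition is violated (the Lyapunov ratio is of order $(m_n/N)^{1/2}\to\infty$). Your ``standard deviation'' $\varepsilon^{-3/2}a_n\sqrt{\log a_n}$ is an artefact of rare near-truncation values and is not the true fluctuation scale, which is $O(a_n)$: indeed, with probability $1-O(1/\log a_n)$ no variable is truncated at all, so the truncated sum coincides with $S_N$, and $N^{-1}(S_N-N\log N)$ converges to a totally skewed $1$-stable law $\Xi_1$, not a Gaussian. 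Likewise your centering is off: the truncated mean exceeds the stable centering $N\log N$ by $\approx N\log\log a_n$, which is large compared with the true $O(N)$ fluctuations, so the sum sits well below its truncated mean and the conditional probability does not tend to $1/2$. The correct conclusion, obtained in the paper directly from $n^{-1}(S_n-n\log n)\Rightarrow\Xi_1$, is that $\mathbf{P}(S_{2b_n}<2\varepsilon^{-1}a_n\log a_n)\to\mathbf{P}(\Xi_1<-\log(2/\varepsilon))$, which is positive because $\Xi_1$ has full support on $\mathbb{R}$. Your required lower bound therefore does hold, but not for the reason you give; replace the CLT step with the $1$-stable limit theorem.
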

\begin{proof}  By the independence of $(\mathcal{A}_n)_{n\geq 1}$ and the second Borel-Cantelli lemma, it will suffice to show that
\begin{equation}\label{sum}
\sum_{n=1}^\infty\mathbf{P}\left(\mathcal{A}_n \right)=\infty.
\end{equation}
Since we have a continuous distribution for $\sigma_0$, it holds that
\begin{align*}
& \mathbf{P}  \left(\mathcal{A}_n \right) =\sum_{i\in I_n}\mathbf{P}\left(\mathcal{A}_n, \:M(I_n)=\sigma_i\right)\\
&=(a_n-b_{n-1})\mathbf{P}\left(\sigma_{a_n}>\varepsilon^{-2} a_n^{1/\alpha} \ell_\alpha(a_n),\:S(I_n\cup J_n  \backslash\{a_n\})<2 \varepsilon^{-1} a_n^{1/\alpha} \ell_\alpha(a_n)\right)\\
& \ge (a_n-b_{n-1})\mathbf{P}\left(\sigma_{0}>\varepsilon^{-2}a_n^{1/\alpha} \ell_\alpha(a_n) \right)\mathbf{P}\left(S_{2 b_n}<2  \varepsilon^{-1} a_n^{1/\alpha} \ell_\alpha(a_n)\right).
\end{align*}
From \eqref{regvar} it is easy to check that the first of these probabilities satisfies
\[ (a_n-b_{n-1})\mathbf{P}\left(\sigma_{0}>\varepsilon^{-2}a_n^{1/\alpha}\ell_\alpha(a_n)\right) \sim \varepsilon^{2\alpha}  \ell_\alpha(a_n)^{-\alpha} \]
as $n \rightarrow \infty$. To control the second probability, we treat the cases $\alpha \in (0, 1)$ and $\alpha = 1$ separately. In the case $\alpha \in (0, 1)$, it is well-known that (see \cite[p. 62, Table 2.1]{UZ99}, for example), as $n \to \infty$, $n^{-1/\alpha}S_n \Rightarrow \Xi_\alpha$ in distribution, where $\Xi_\alpha$ is a random variable with a maximally-asymmetric $\alpha$-stable law supported on $(0, \infty)$. Hence
\[\mathbf{P}\left(S_{2b_n}< 2 \varepsilon^{-1} a_n^{1/\alpha}  \right)  \rightarrow \mathbf{P}\left(\Xi_\alpha< (2\varepsilon^{-1})^{1-1/\alpha}\right) \]
as $n\rightarrow\infty$. Thus we find that
\[\mathbf{P}\left(\mathcal{A}_n\right)\rightarrow \varepsilon^{2\alpha}\mathbf{P} \left(\Xi_\alpha< (2\varepsilon^{-1})^{1-1/\alpha}\right)  >0,\]
and the result at \eqref{sum} follows. In the case $\alpha = 1$, it is instead the case that (again, see \cite[p. 62, Table 2.1]{UZ99}, for example), as $n \to \infty$, $ n^{-1} (S_n - n \log n ) \Rightarrow \Xi_1$ in distribution, where $\Xi_1$ is a random variable with a maximally-symmetric $1$-stable law supported on~$\mathbb{R}$. Hence
\[\mathbf{P}\left(S_{2b_n}< 2 \varepsilon^{-1}  a_n \log a_n \right)  \rightarrow \mathbf{P}\left(\Xi_1< - \log (2/ \varepsilon)  \right)  > 0\]
as $n\rightarrow\infty$. Thus we find that, as $n \to \infty$,
\[\mathbf{P}\left(\mathcal{A}_n\right) \sim  c \log(a_n)^{-1} \sim c / (2 n \log n) ,\]
for some constant $c > 0$, and so the result at \eqref{sum} also follows in this case.
\end{proof}

\begin{lmm}\label{enlem}
As $n \to \infty$, it $\mathbf{P}$-a.s.\ holds that $\mathcal{E}_n$ occurs infinitely often.
\end{lmm}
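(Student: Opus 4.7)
The plan is to show that $\mathcal{E}_n$ is implied by $\mathcal{A}_n$ together with a mild upper bound on $\bar S_{b_{n-1}}$, and then to establish that this upper bound holds for all sufficiently large $n$, $\mathbf{P}$-a.s. Combined with Lemma \ref{anlem}, this will yield the claim.

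First I would make the geometric observation that the disjointness structure of the intervals gives $\bar S_{b_n} = \bar S_{b_{n-1}} + S(I_n \cup J_n)$ for $n\geq 2$, since $I_n \cup J_n = [-b_n,b_n] \setminus [-b_{n-1}, b_{n-1}]$. Also, $M_{a_n} \geq M(I_n)$, since $I_n \subseteq [1,a_n]$. Consequently, on the event $\mathcal{A}_n$,
\[ \bar S_{b_n} - M_{a_n} \leq \bar S_{b_{n-1}} + \bigl(S(I_n\cup J_n) - M(I_n)\bigr) < \bar S_{b_{n-1}} + 2\varepsilon^{-1} a_n^{1/\alpha}\ell_\alpha(a_n), \]
while the lower bound on $M(I_n)$ in $\mathcal{A}_n$ trivially implies the lower bound on $M_{a_n}$ in $\mathcal{E}_n$. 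Hence $\mathcal{A}_n \cap \{\bar S_{b_{n-1}} < \varepsilon^{-1} a_n^{1/\alpha} \ell_\alpha(a_n)\} \subseteq \mathcal{E}_n$.

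Next I would show that $\mathbf{P}$-a.s., $\bar S_{b_{n-1}} < \varepsilon^{-1} a_n^{1/\alpha}\ell_\alpha(a_n)$ for all sufficiently large $n$. By the first Borel–Cantelli lemma, it suffices to verify that the sequence of probabilities $\mathbf{P}(\bar S_{b_{n-1}} \ge \varepsilon^{-1} a_n^{1/\alpha}\ell_\alpha(a_n))$ is summable. For $\alpha\in(0,1)$, a standard Nagaev-type estimate (or a direct union bound over whether some $\sigma_i$ exceeds the threshold, combined with truncation) yields $\mathbf{P}(S_N \geq u) \leq C N u^{-\alpha}$ for $u \geq N^{1/\alpha}$. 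Applied with $N \asymp b_{n-1} \asymp \varepsilon^{-1} a_{n-1}$ and $u = \varepsilon^{-1} a_n^{1/\alpha}$, this gives an upper bound of order $a_{n-1}/a_n \sim n^{-2}$, which is summable by the choice of $(a_n)$. The case $\alpha = 1$ is analogous, using the appropriate truncation bound with the logarithmic correction, and again produces a summable tail of order $a_{n-1}/a_n$.

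Finally, combining the two steps: by Lemma \ref{anlem} the event $\mathcal{A}_n$ occurs for infinitely many $n$, $\mathbf{P}$-a.s., while by the Borel–Cantelli step $\bar S_{b_{n-1}} < \varepsilon^{-1} a_n^{1/\alpha}\ell_\alpha(a_n)$ holds for all but finitely many $n$, $\mathbf{P}$-a.s. The intersection of these two almost-sure events implies that $\mathcal{E}_n$ occurs for infinitely many $n$. The main obstacle is the Borel–Cantelli step: one needs a quantitative heavy-tail estimate for $\bar S_{b_{n-1}}$ that is strong enough to be summable, rather than merely the distributional limits used in the proof of Lemma \ref{anlem}. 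The rapid growth of $a_n$ (built in precisely so that $a_{n-1}/a_n$ is summable) is what makes this feasible.
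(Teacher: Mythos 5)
Your proposal is correct and follows essentially the same route as the paper: you intersect $\mathcal{A}_n$ with an eventual almost-sure bound on $\bar S_{b_{n-1}}$, verify the containment in $\mathcal{E}_n$, and get summability of order $a_{n-1}/a_n\sim n^{-2}$ for the Borel--Cantelli step. The only difference is cosmetic: the paper obtains the tail bound $\mathbf{P}(S_N>u)\lesssim Nu^{-\alpha}\ell_\alpha(u)$ via a Laplace-transform/Markov-inequality computation that handles $\alpha\in(0,1)$ and $\alpha=1$ uniformly, whereas you invoke a Nagaev-type truncation estimate; both deliver the same summable bound.
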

\begin{proof}
We start by defining the event
\[\mathcal{B}_n :=\left\{\bar S_{b_{n-1}}<  a_n^{1/\alpha} \ell_\alpha(a_n) \right\} ,\]
which we claim holds eventually, $\mathbf{P}$-a.s.  To prove this, note that it is an elementary computation to deduce from \eqref{regvar} that
\[1 - \mathbf{E}\left(e^{-\theta\sigma_0}\right)   \sim  c_\alpha \theta^\alpha \ell_\alpha(\theta^{-1}) \]
as $\theta\rightarrow0$, where
\[  c_\alpha :=  \begin{cases}
 \Gamma(1-\alpha) , & \text{if } \alpha \in (0, 1), \\
1 , & \text{if } \alpha = 1,
\end{cases}\]
with $\Gamma$ the usual gamma function. Thus, for any $c_n$ such that
\[   \lim_{n \to \infty}  n c_n^{-\alpha} \ell_\alpha(c_n) = 0 , \]
applying Markov's inequality we have,
\begin{eqnarray*}
\mathbf{P}\left( S_n >  c_n \right) & \leq&  \frac{1-\mathbf{E}\left(e^{-c_n^{-1}
 S_n }\right)}{1-e^{-1}}\\
&=&(1-e^{-1})^{-1}\left(1-\mathbf{E}\left(e^{-c_n^{-1} \sigma_0}\right)^{n}\right)\\
&\sim& c_\alpha (1-e^{-1})^{-1}  n c_n^{-\alpha} \ell_\alpha(c_n)  .
\end{eqnarray*}
Finally note that there is a constant $c > 0$ such that, as $n \to \infty$, eventually
\[  \frac{(2b_{n-1} + 1)  \ell_\alpha(a_n^{1/\alpha} \ell_\alpha(a_n))  }{a_n \ell_\alpha(a_n)^\alpha}  < c n^{-2} \to 0 , \]
where we have used the fact that $a_{n-1}/a_n \sim n^{-2}$. Hence we deduce that there exists a constant $c > 0$ such that, as $n \to \infty$, eventually
\[ \mathbf{P}\left( \bar S_{b_{n-1}}  > a_n^{1/\alpha} \ell_\alpha(a_n) \right) = \mathbf{P} \left( S_{2b_{n-1} + 1}  > a_n^{1/\alpha} \ell_\alpha(a_n)   \right) < c n^{-2}  , \]
and by the Borel-Cantelli lemma the claim is proved.

Now, it is a consequence of Lemma \ref{anlem} and the conclusion of the previous paragraph that $\mathcal{A}_n\cap\mathcal{B}_n$ occurs infinitely often, $\mathbf{P}$-a.s. Thus to complete the proof it will suffice to show that $\mathcal{A}_n \cap\mathcal{B}_n \subseteq \mathcal{E}_n$. However, this is straightforward, since on $\mathcal{A}_n\cap\mathcal{B}_n$, we have that
\[M_{a_n}=\max\{M_{b_{n-1}},M(I_n)\} \geq M(I_n) >\varepsilon^{-2}a_n^{1/\alpha} \ell_\alpha(a_n),\]
and also
\begin{align*}
\bar S_{b_n}-M_{a_n} \leq \bar S_{b_{n-1}}+S(I_n\cup J_n )-M(I_n) < (2 \varepsilon^{-1} + 1) a_n^{1/\alpha} \ell_\alpha(a_n)
\end{align*}
as required, since $\varepsilon < 1$.
\end{proof}

We now proceed to establish that, on the event $\mathcal{E}_n$, at the time $t_n$ the BTM is completely localised, up to an $\varepsilon$-dependent error, on the site $x_n$. We first state a general localisation result that is valid for arbitrary times $t > 0$ and deterministic trapping landscapes $\sigma$, before specialising to the time $t_n$ and invoking the properties of the trapping landscape contained in $\mathcal{E}_n$.

\begin{lmm}\label{locallem}
Let $\sigma$ be a deterministic strictly-positive trapping landscape, and denote by $X$ the continuous-time $\mathbb{Z}$-valued Markov chain, started from $0$, with transition rates given by \eqref{trates}, with $P_\sigma$ its law.
Then, for each pair of sites $0 < x < y$ and each time $t > 0$,
\[P_\sigma\left( X_t = x \right) \ge  \left(\frac{y}{x+y} -  t^{-1} x  \sum_{-y < z < x} \sigma_z \right)_+ \left( \frac{\sigma_x}{\sum_{-y  \leq z \leq y } \sigma_z}   - \frac{t}{(y-x)\sigma_x} \right)_+ , \]
where $\alpha_+$ denotes the positive part of $\alpha\in\mathbb{R}$.
\end{lmm}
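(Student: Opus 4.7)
The plan is to apply the strong Markov property at the hitting time $T_x := \inf\{s : X_s = x\}$, writing
\[ P_\sigma(X_t = x) \;\geq\; P_\sigma(T_x \leq t,\, T_x < T_{-y}) \cdot \inf_{0 \leq s \leq t} P^x_\sigma(X_s = x), \]
where $P^x_\sigma$ denotes the law of the walk started from $x$ and $T_{-y} := \inf\{s : X_s = -y\}$. The two factors in the claim will correspond to lower bounds on the two factors on the right.

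For the hitting factor, I would first note that $\{T_x < T_{-y}, T_x > t\} \subseteq \{T_{\{-y,x\}} > t\}$ (once the walk hits $\{-y,x\}$, on this event it must do so at $x$), so
\[ P_\sigma(T_x \leq t, T_x < T_{-y}) \;\geq\; P_\sigma(T_x < T_{-y}) - P_\sigma(T_{\{-y, x\}} > t). \]
The first term equals $y/(x+y)$ by gambler's ruin applied to the embedded simple random walk (whose jump chain is symmetric nearest-neighbour). The second is bounded by Markov's inequality applied to $E_0[T_{\{-y,x\}}] = \sum_{-y < z < x} \sigma_z G(0,z)$, where $G$ is the Green's function of simple random walk killed on exiting $(-y,x)$, together with a uniform Green's function estimate of order $x$ (using the explicit form $G(0,z) = 2y(x-z)/(x+y)$ for $z \geq 0$ and its mirror for $z < 0$, and the regime $x \leq y$).

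For the holding factor, I would couple $X$ after time $T_x$ with the random walk $\tilde X$ on $[-y,y]$ with reflecting boundaries, agreeing with $X$ until the first time $X$ reaches $\pm y$. Since $\tilde X$ is a reversible continuous-time Markov chain on a finite state space with stationary probability $\pi(z) = \sigma_z / \sum_{-y \leq w \leq y} \sigma_w$, its spectral decomposition
\[ \tilde P^s_x(x) / \pi(x) \;=\; \sum_i e^{\lambda_i s} \phi_i(x)^2 \]
(with all $\lambda_i \leq 0$) shows that $s \mapsto \tilde P^s_x(x)$ is non-increasing and converges to $\pi(x)$; hence $\tilde P^s_x(x) \geq \pi(x)$ for every $s \geq 0$. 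The coupling then yields $P^x_\sigma(X_s = x) \geq \pi(x) - P^x_\sigma(T_{\{-y,y\}} \leq s)$. To control the escape probability, I would use excursion counting from $x$: the probability that a single excursion reaches $\pm y$ before returning to $x$ is at most $1/(y-x)$ (by gambler's ruin on the jump chain applied on either side, using $y-x \leq y+x$), while by Wald's identity applied to the iid holding times at $x$ the mean number of departures from $x$ by time $s$ is at most $s/\sigma_x$. Combining gives an escape bound $s/((y-x)\sigma_x) \leq t/((y-x)\sigma_x)$ valid for all $s \leq t$.

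The main obstacle will be the bookkeeping of constants: verifying the Green's function and excursion-counting estimates with exactly the stated coefficients, and taking the inf over $s \in [0,t]$ uniformly so that the second factor can be pulled through the random time $t - T_x$ in the strong Markov step. Once both factors are assembled and the positive parts taken to handle the ranges where the errors exceed the main terms, the stated bound follows.
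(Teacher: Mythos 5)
Your architecture is exactly the paper's: apply the strong Markov property at $\tau_x$, lower-bound the hitting factor by $P_\sigma(\tau_x<\tau_{-y})-P_\sigma(\tau_x\wedge\tau_{-y}\ge t)$ with gambler's ruin giving $y/(x+y)$, and lower-bound the holding factor by the stationary probability $\sigma_x/\sum_{-y\le z\le y}\sigma_z$ of the reflected chain on $[-y,y]$ (via monotonicity of $s\mapsto\hat P^x_\sigma(\hat X_s=x)$ and its limit) minus an escape term $t/((y-x)\sigma_x)$, with the two factors combined through $\inf_{s\le t}P^x_\sigma(X_s=x)$. The only substantive difference is that the paper imports the two quantitative exit-time estimates from \cite[Propositions 2.1 and 2.2]{CM15}, whereas you re-derive them. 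Your excursion-counting derivation of the second estimate is correct and yields exactly the stated coefficient: the per-excursion escape probability from $x$ is $\frac{1}{2}\left(\frac{1}{y-x}+\frac{1}{x+y}\right)\le\frac{1}{y-x}$, and Wald's identity bounds the expected number of departures from $x$ by time $s$ by $s/\sigma_x$.

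The step that does not deliver the statement as written is your derivation of the first exit-time estimate. With the (correct) Green's function $G(0,z)=2(z\wedge 0+y)(x-z\vee 0)/(x+y)$ for simple random walk killed on exiting $(-y,x)$, the supremum over $z$ is attained at $z=0$ and equals $2xy/(x+y)$, which strictly exceeds $x$ whenever $y>x$; correspondingly $E_0[\tau_x\wedge\tau_{-y}]$ can itself exceed $x\sum_{-y<z<x}\sigma_z$ (take $x=1$, $y=2$ and $\sigma_0\gg\sigma_{-1}$, so that the expectation is $\frac{4}{3}\sigma_0+\frac{2}{3}\sigma_{-1}$). Hence Markov's inequality applied to the mean exit time can give at best $P_\sigma(\tau_x\wedge\tau_{-y}\ge t)\le t^{-1}\frac{2xy}{x+y}\sum_{-y<z<x}\sigma_z\le 2t^{-1}x\sum_{-y<z<x}\sigma_z$, i.e.\ the lemma with the coefficient of the first error term doubled. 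That weaker version would still suffice for Corollary \ref{crllocal} and hence for Theorem \ref{mainthm}, since only the $1-O(\varepsilon)$ behaviour is used, but to obtain the constant $x$ exactly one needs the sharper tail bound of \cite[Proposition 2.1]{CM15}, which is not a consequence of Markov's inequality on the expectation; you should either invoke that result or accept the factor $2$.
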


\begin{proof}
We prove Lemma \ref{locallem} in a similar manner to the equivalent results in \cite{CM15}, which were established in the one-sided case. Here we adapt these arguments to the two-sided case.

For a site $z \in \mathbb{Z}$, denote by $\tau_z$ the first hitting time by $X$ of $z$, i.e.\ $\tau_z := \inf \{s \ge 0 : X_s = z \}$, and further denote by $P_\sigma^z$ the law of the Markov chain $X$ started from $z$. By applying the strong Markov property at the (almost-surely finite) stopping time $\tau_x$, we have that
\begin{equation}
P_\sigma\left( X_t = x \right)=\int_0^tP^x_\sigma\left( X_{t-s} = x\right)P_\sigma\left( \tau_x\in ds\right)\geq  P_\sigma\left( \tau_x\leq t\right)\inf_{s\leq t}P^x_\sigma\left( X_{s} = x\right).\label{tttt}
\end{equation}
We will bound these probabilities using the methods of \cite{CM15}.

In particular, to bound the first of the terms in the lower bound at \eqref{tttt}, note that
\[ P_\sigma\left( \tau_x\leq t\right)\geq P_\sigma\left( \tau_x\leq t \wedge  \tau_{-y} \right)\geq P_\sigma\left( \tau_x < \tau_{-y}\right)- P_\sigma\left(  \tau_x \wedge \tau_{-y} \geq t \right).\]
By basic properties of random walks, it holds that $P_\sigma( \tau_x < \tau_{-y}) = y/(x+y)$. Moreover, by \cite[Proposition 2.1]{CM15}, we have the following upper bound
\[  P_\sigma( \tau_x \wedge \tau_{-y}  \ge t) \le  t^{-1} x \sum_{-y < z < x} \sigma_z , \]
and so we conclude
\[ P_\sigma\left( \tau_x\leq t\right)\geq \left(\frac{y}{x+y} -  t^{-1} x  \sum_{-y < z < x} \sigma_z\right)_+ .\]

To bound the second of the terms in the lower bound at \eqref{tttt}, we observe
\begin{eqnarray}
P^x_\sigma\left( X_{s} = x\right)&\geq& P^x_\sigma\left( X_s = x, \tau_{-y} \wedge \tau_y >  t \right)\nonumber\\
&\geq& P^x_\sigma \left( \tau_{-y} \wedge \tau_y  >  t  \right) - P^x_\sigma\left( X_s \neq x,  \tau_{-y} \wedge \tau_y  >  t \right).\label{t5}
\end{eqnarray}
For the first term here we can apply the following lower bound (see \cite[Proposition 2.2]{CM15})
\[  P^x_\sigma(  \tau_{-y} \wedge \tau_y    > t) \ge  1 - \frac{t}{(y-x) \sigma_x } .\]
Towards bounding the remaining probability in \eqref{t5}, we define the continuous-time $[-y, y]$-valued Markov chain $\hat X$, started from $x$, with transition rates given by \eqref{trates} (interpreting $\sim$ as denoting nearest neighbours on $[-y, y] \cap\mathbb{Z}$), and let $\hat P^x_\sigma$ be its law. Then it is clear that $(\hat X_{s \wedge \tau_y \wedge \tau_{-y}} )_{s \ge 0}$ has the same distribution as the chain $(X_{s \wedge \tau_y \wedge \tau_{-y}} )_{s \ge 0}$ started from $x$. Hence, for any $s\leq t$, we have that
\begin{align*}
P^x_\sigma \left( X_s \neq x ,   \tau_y \wedge \tau_{-y} > t \right)    & =   \hat P^x_\sigma \left( \hat X_s \neq x,  \tau_y \wedge \tau_{-y} > t \right)  \\
&  \le  \hat P^x_\sigma \left( \hat X_s \neq x  \right) \le  1 - \inf_{u \ge 0}  \hat P^x_\sigma \left( \hat{X}_u= x \right) .
\end{align*}
Finally note that, since $\hat P^x_\sigma(\hat X_s = x )$ is decreasing in $s$, the process $\hat X$ satisfies
\[ \inf_{s \ge 0} \hat P^x_\sigma(\hat X_s = x ) = \lim_{s\to \infty} \hat P^x_\sigma(\hat X_s = x )= \frac{\sigma_x}{\sum_{-y  \leq z \leq y } \sigma_z} , \]
where the second inequality follows from the ergodicity of the Markov chain in question, and a computation of its invariant distribution from the detailed balance equations. Combining the above results establishes that
\[\inf_{s\leq t}P^x_\sigma\left( X_{s} = x\right)\geq
\left(\frac{\sigma_x}{\sum_{-y  \leq z \leq y } \sigma_z}   - \frac{t}{(y-x)\sigma_x}\right)_+,\]
which completes the proof.
\end{proof}

Applying the previous result with the setting $x := x_n, y := b_n$ and $t := t_n$, together with the definition of $\mathcal{E}_n$, we readily deduce the following.

\begin{crl}\label{crllocal}
On the event $\mathcal{E}_n$, it holds that
\begin{align} \label{loc}
P_\sigma\left(X_{t_n}=x_n\right) >  \left(\frac{1}{1+\varepsilon}-3\varepsilon\right)_+\left( \frac{1}{1+3\varepsilon} - \frac{\varepsilon}{1-\varepsilon}\right)_+.
\end{align}
The right-hand side of \eqref{loc} can be written $1 - O(\varepsilon)$ as $\varepsilon \to 0$.
\end{crl}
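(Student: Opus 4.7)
The plan is to substitute $x := x_n$, $y := b_n$, and $t := t_n$ directly into the lower bound of Lemma~\ref{locallem}, and then control each of the two factors using the properties of the trapping landscape encoded in $\mathcal{E}_n$ together with the definitions of $a_n$, $b_n$, and $t_n$. The key numerical inputs are $x_n \in \{1,\ldots,a_n\}$, $b_n \ge \varepsilon^{-1} a_n$, the lower bound $\sigma_{x_n} = M_{a_n} > \varepsilon^{-2} a_n^{1/\alpha}\ell_\alpha(a_n)$ from the definition of $\mathcal{E}_n$, and the complementary bound $\bar{S}_{b_n} - M_{a_n} < 3\varepsilon^{-1} a_n^{1/\alpha}\ell_\alpha(a_n)$.

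For the first factor, I would use $b_n/(x_n + b_n) \ge \varepsilon^{-1} a_n / (a_n + \varepsilon^{-1} a_n) = 1/(1+\varepsilon)$. To bound the correction term $t_n^{-1} x_n \sum_{-b_n < z < x_n} \sigma_z$, I would observe that the index range of this sum does not contain the point $x_n$ attaining $M_{a_n}$, so the sum is bounded above by $\bar{S}_{b_n} - M_{a_n}$. Plugging in the bound from $\mathcal{E}_n$, together with $x_n \le a_n$ and the definition of $t_n$, the correction term is at most $\varepsilon^2 \cdot a_n \cdot 3\varepsilon^{-1}/(a_n) = 3\varepsilon$, yielding the factor $1/(1+\varepsilon) - 3\varepsilon$.

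For the second factor, I would combine $M_{a_n} > \varepsilon^{-2} a_n^{1/\alpha}\ell_\alpha(a_n)$ with the upper bound on $\bar{S}_{b_n} - M_{a_n}$ to obtain $\bar{S}_{b_n} < (1 + 3\varepsilon) M_{a_n}$, so that $\sigma_{x_n}/\bar{S}_{b_n} \ge 1/(1+3\varepsilon)$. For the subtractive term, I would use $b_n - x_n \ge (\varepsilon^{-1}-1) a_n$ and the lower bound on $\sigma_{x_n} = M_{a_n}$ to bound $t_n/((b_n - x_n)\sigma_{x_n})$ by $\varepsilon/(1-\varepsilon)$, producing the second factor $1/(1+3\varepsilon) - \varepsilon/(1-\varepsilon)$.

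The final assertion that the right-hand side of \eqref{loc} is $1 - O(\varepsilon)$ as $\varepsilon \to 0$ follows by elementary Taylor expansion of the two factors around $\varepsilon = 0$. There is no serious obstacle in this argument; it is purely a matter of careful substitution and bookkeeping, with the one small subtlety being the observation that $x_n$ is excluded from the index set in the first correction sum, which is what allows one to subtract off the dominant contribution $M_{a_n}$ rather than just use $\bar{S}_{b_n}$ as an upper bound.
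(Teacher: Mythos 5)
Your proposal is correct and is exactly the argument the paper intends: the paper simply states that the corollary follows by applying Lemma~\ref{locallem} with $x := x_n$, $y := b_n$, $t := t_n$ and the definition of $\mathcal{E}_n$, and your computations (including the key observation that $x_n$ is excluded from the sum $\sum_{-b_n < z < x_n}\sigma_z$, so it is bounded by $\bar S_{b_n} - M_{a_n}$) fill in precisely the bookkeeping the authors leave to the reader. All four numerical bounds check out against the definitions of $a_n$, $b_n$ and $t_n$.
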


Observe that, for any $\varepsilon \in (0, 1)$, the times $t_n \to \infty$ as $n \to \infty$. Hence, since $\varepsilon \in (0, 1)$ was arbitrary, as a simple consequence of Lemma \ref{enlem} and Corollary \ref{crllocal} we obtain the $\limsup$ part of Theorem \ref{mainthm}. Furthermore, note that on $\mathcal{E}_n$ we have that
\[1 \ge \frac{M_{a_n}}{S_{a_n}} \ge \frac{M_{a_n}}{\bar S_{b_n}} = \frac{1}{1+(\bar S_{b_n} - M_{a_n})/M_{a_n}}  > \frac{1}{1 + 3\varepsilon} .\]
Hence, since $a_n \rightarrow \infty$ and $\varepsilon \in (0, 1)$ was arbitrary, we also deduce from Lemma \ref{enlem} the $\limsup$ part of Theorem \ref{summax}.

\section{Complete delocalisation}
\label{sec:comdeloc}

In this section we prove that the BTM is completely delocalised at arbitrarily large times, that is, we prove the $\liminf$ part of Theorem \ref{mainthm}. As in Section \ref{sec:comloc}, our approach is to show that certain favourable configurations of the trapping landscape occur infinitely often $\mathbf{P}$-a.s., and moreover, when such favourable configurations arise, the BTM is highly delocalised.

Throughout this section we again introduce an $\varepsilon\in(0,1)$ to act as our error threshold. We also introduce a parameter $K \in \mathbb{N}$ which measures the extent of the spread of the probability mass function of the BTM. Again, unless explicitly stated, both $\varepsilon$ and $K$ will remain fixed throughout this section, and as such we suppress the explicit dependence of other notation on $\varepsilon$ and $K$.

To define the favourable configurations, recall the scale $a_n$ from \eqref{andef} and further define, for $k \in \mathbb{Z}$, the evenly spaced sites $ a_{n, k} := k a_n$,
the corresponding intervals
\[I_{n,k}:=[a_{n,k},a_{n,k+1}),\]
and the events, for $n \in \mathbb{N}$,
\[\mathcal{E}_n:= \bigcap_{k \in [-K, K]} \left\{S(I_{n,k})\in\left(\frac{1}{2}a_n^{1/\alpha}\ell_\alpha(a_n),2a_n^{1/\alpha}\ell_\alpha(a_n)\right),\:M(I_{n,k})<\varepsilon a_n^{1/\alpha} \ell_\alpha(a_n) \right\} .\]
We will show that, for each $\varepsilon \in (0, 1)$ and $K \in \mathbb{N}$, the events $\mathcal{E}_n$ occur infinitely often $\mathbf{P}$-a.s.\ (see Lemma \ref{enlem2}). Moreover, we show that, when the event $\mathcal{E}_n$ occurs, at the time
\[t_n:=12a_n^{1+1/\alpha}\ell_\alpha(a_n)\]
no site carries a non-negligible ($(\varepsilon,K)$-dependent) proportion of the probability mass of the BTM (see Lemma \ref{farbound}).

To establish that the events $\mathcal{E}_n$ occur infinitely often, we again start by proving the corresponding result for a closely related sequence of independent events. In particular, define the collection of intervals $(\tilde{I}_{n, k})_{n \ge 2, k \in [-K,K]}$ by setting,
\[ \tilde{I}_{n,0} :=[(K+1)a_{n-1},a_n), \qquad \tilde{I}_{n, -1} := [a_n, -Ka_{n-1}) \]
and $\tilde{I}_{n,k}={I}_{n,k}$ for $k\in [-K, K] \setminus \{-1, 0\}$. We then set
\[\mathcal{A}_n:=  \bigcap_{k \in [-K, K]} \left\{S(\tilde{I}_{n,k})\in\left(\frac{1}{2}a_n^{1/\alpha}\ell_\alpha(a_n),\frac{3}{2}a_n^{1/\alpha}\ell_\alpha(a_n)\right),\:M(\tilde{I}_{n,k})< \varepsilon a_n^{1/\alpha} \ell_\alpha(a_n) \right\} .\]
Observe that the intervals $(\tilde{I}_{n, k})_{n \ge 2, k \in [-K, K]}$  are distinct for sufficiently large $n$. Hence, as $n \to \infty$ the events $\mathcal{A}_n$ are eventually independent. We use this fact in the proof of the following result.

\begin{lmm} \label{anlem2}
As $n \to \infty$, it $\mathbf{P}$-a.s.\ holds that $\mathcal{A}_n$ occurs infinitely often.
\end{lmm}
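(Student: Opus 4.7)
The plan is to apply the second Borel--Cantelli lemma to the events $(\mathcal{A}_n)$. Since, as noted in the excerpt, these events are eventually independent, it suffices to show $\sum_n \mathbf{P}(\mathcal{A}_n) = \infty$, and in fact I will establish the stronger statement $\liminf_n \mathbf{P}(\mathcal{A}_n) > 0$. Within a single $n$, the intervals $\tilde I_{n,-K}, \dots, \tilde I_{n,K}$ are themselves disjoint (each $\tilde{I}_{n,k}$ lies in the corresponding $I_{n,k}=[ka_n,(k+1)a_n)$), so the $2K+1$ events whose intersection defines $\mathcal{A}_n$ are mutually independent and
\[\mathbf{P}(\mathcal{A}_n) \;=\; \prod_{k=-K}^{K} p_{n,k},\]
where $p_{n,k}$ denotes the probability of the $k$-th event.

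Since $|\tilde I_{n,k}| = a_n - O(a_{n-1}) \sim a_n$ (using $a_{n-1}/a_n \sim n^{-2}$), each $p_{n,k}$ is, up to vanishing corrections absorbed by the open interval $(\tfrac{1}{2}, \tfrac{3}{2})$ of allowed sums, a probability of the form
\[\mathbf{P}\!\left(\frac{S_m}{m^{1/\alpha}\ell_\alpha(m)} \in (\tfrac{1}{2},\tfrac{3}{2}),\ \frac{M_m}{m^{1/\alpha}\ell_\alpha(m)} < \varepsilon\right)\]
for $m = |\tilde I_{n,k}| \to \infty$, so the task reduces to bounding this below uniformly in $m$ via the appropriate stable limit theorem. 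For $\alpha \in (0,1)$, classical joint convergence gives $(S_m, M_m)/m^{1/\alpha} \Rightarrow (\Xi_\alpha, \Psi_\alpha)$, where $(\Xi_\alpha, \Psi_\alpha)$ arises as the (sum, maximum) of a Poisson point process $N$ on $(0,\infty)$ with intensity $\alpha u^{-\alpha-1}\,du$; since this limit is continuous on the relevant open region, $p_{n,k}$ converges to $\mathbf{P}(\Xi_\alpha \in (\tfrac{1}{2},\tfrac{3}{2}),\: \Psi_\alpha < \varepsilon)$. Using the Poisson representation, the latter factors as
\[e^{-\varepsilon^{-\alpha}} \cdot \mathbf{P}\!\left(\int_0^\varepsilon u\,N(du) \in (\tfrac{1}{2},\tfrac{3}{2})\right),\]
and the truncated sum is a strictly positive infinitely divisible random variable whose density is supported on all of $(0,\infty)$, so the probability is strictly positive. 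For $\alpha = 1$ the situation is easier: the centred stable limit $(S_m - m\log m)/m \Rightarrow \Xi_1$ gives $S_m/(m \log m) \to 1$ in probability, while the Fréchet limit $M_m/m \Rightarrow \Psi_1$ gives $M_m/(m \log m) \to 0$ in probability, so $p_{n,k} \to 1$.

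The principal technical hurdle is the $\alpha \in (0,1)$ case, where one must verify both the joint weak convergence of (sum, maximum) and strict positivity of the limiting probability in the presence of the maximum-truncation constraint; the remaining arithmetic (length estimates, absorbing corrections into the $(\tfrac{1}{2},\tfrac{3}{2})$ window, and passage to the Poisson representation) is routine. Once $\liminf_n p_{n,k} > 0$ is secured for each $k$, taking the product of the $2K+1$ independent terms yields $\liminf_n \mathbf{P}(\mathcal{A}_n) > 0$, hence $\sum_n \mathbf{P}(\mathcal{A}_n) = \infty$, and the Borel--Cantelli lemma concludes the proof.
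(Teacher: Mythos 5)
Your proposal is correct and shares the paper's overall strategy: second Borel--Cantelli applied to the eventually independent events $\mathcal{A}_n$, reduced to showing that $\mathbf{P}(\mathcal{A}_n)$ converges to a positive limit via a stable limit theorem. The technical route differs in how that limit is extracted and justified. The paper invokes the functional convergence $(n^{-1/\alpha}\ell_\alpha(n)^{-1}S_{\lfloor nt\rfloor})_{t\ge 0}\Rightarrow(\Xi_\alpha(t))_{t\ge 0}$ in the Skorohod $J_1$ topology and applies continuous mapping to $f\mapsto f(1)$ and $f\mapsto\sup_{t\in[0,1]}\Delta f(t)$, obtaining the limit $\mathbf{P}(\Xi_\alpha(1)\in(1/2,3/2),\,\sup_{t\in[0,1]}\Delta\Xi_\alpha(t)<\varepsilon)^{2K+1}$ directly and merely asserting its positivity; you instead use the classical joint weak convergence of the normalised pair $(S_m,M_m)$, which is the finite-dimensional shadow of the same fact (the maximal jump of the subordinator on $[0,1]$ is the maximum of the underlying Poisson points), so the two limiting probabilities coincide. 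Where you genuinely add value is the positivity argument for $\alpha\in(0,1)$: conditioning the Poisson point process on having no atoms above $\varepsilon$ (probability $e^{-\varepsilon^{-\alpha}}$) and observing that the truncated integral $\int_0^\varepsilon u\,N(du)$ has full support on $[0,\infty)$ --- in fact you only need that the open window $(1/2,3/2)$ meets the support, so the density claim is stronger than necessary. Your $\alpha=1$ treatment ($S_m/(m\log m)\to 1$ and $M_m/(m\log m)\to 0$ in probability, hence $p_{n,k}\to 1$) matches the degenerate limit $\Xi_1(t)=t$ in the paper. Two small points you gesture at but should state explicitly: the within-$n$ factorisation over $k$ uses the disjointness of the $\tilde I_{n,k}$ (true by construction), and absorbing the normalisation mismatch $|\tilde I_{n,k}|\sim a_n$ into the open window requires that its boundary be a null set for the limit law, which holds because $\Xi_\alpha$ and the Fr\'echet limit of the maximum both have continuous distributions.
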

\begin{proof}
By the eventual independence of $(\mathcal{A}_n)_{n\geq 2}$ and the second Borel-Cantelli lemma, it suffices to show that $\sum_{n=2}^\infty\mathbf{P}(\mathcal{A}_n)=\infty$. It is well-known (\cite{Skoro}) that as $n \to \infty$
\[  \left( n^{-1/\alpha} \ell_\alpha(n)^{-1} S_{\lfloor nt \rfloor} \right)_{t \ge 0} \Rightarrow   \left(\Xi_\alpha(t) \right)_{t \ge 0}  \]
weakly in the space $D(\mathbb{R}^+)$ of real-valued c\`{a}dl\`{a}g functions equipped with the standard Skorohod $J_1$ topology (see \cite{Whitt02} for the definition), where $\Xi_\alpha(t)$ denotes an $\alpha$-stable subordinator for $\alpha \in (0, 1)$, and $\Xi_1(t) := t$. Notice that the functionals $f\mapsto f(1)$ and $f \mapsto \sup_{t \in [0, 1]} \Delta f(t)$, where $\Delta f (t):=f(t)-f(t^-)$ denotes the jump in the function $f$ at the time $t$, are both continuous in the $J_1$ topology for functions that are continuous at $t=1$, and moreover that all fixed times are continuity times for $\Xi_\alpha$. Hence it follows that
\[\mathbf{P}\left(\mathcal{A}_n \right) \rightarrow \mathbf{P}\left(\Xi_\alpha(1)\in(1/2,3/2),\:\sup_{t\in[0,1]}\Delta\Xi_\alpha(t)<\varepsilon\right)^{2K+1} > 0\]
as desired.
\end{proof}

\begin{lmm}\label{enlem2}
As $n \to \infty$, it $\mathbf{P}$-a.s.\ holds that $\mathcal{E}_n$ occurs infinitely often.
\end{lmm}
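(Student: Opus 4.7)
The plan is to mirror the proof of Lemma \ref{enlem}: introduce an auxiliary event $\mathcal{B}_n$ that controls the discrepancy between the original intervals $I_{n,k}$ and their modified independent counterparts $\tilde I_{n,k}$, show that $\mathcal{B}_n$ holds eventually via Borel--Cantelli, and then verify $\mathcal{A}_n \cap \mathcal{B}_n \subseteq \mathcal{E}_n$ so that Lemma \ref{anlem2} gives the conclusion.

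First, observe that $\tilde I_{n,k}=I_{n,k}$ for every $k\in[-K,K]\setminus\{-1,0\}$, while for $k\in\{-1,0\}$ the symmetric difference $I_{n,k}\triangle \tilde I_{n,k}$ is contained in $[-(K+1)a_{n-1},(K+1)a_{n-1}]$. Setting $c:=\min\{\tfrac12,\tfrac{\varepsilon}{2}\}$, I would therefore define
\[\mathcal{B}_n:=\left\{\bar S_{(K+1)a_{n-1}}<c\,a_n^{1/\alpha}\ell_\alpha(a_n)\right\}.\]
To show that $\mathcal{B}_n$ holds eventually $\mathbf{P}$-a.s., I would reuse verbatim the Laplace-transform/Markov inequality computation from the proof of Lemma \ref{enlem}, which yields
\[\mathbf{P}(S_m>c_m)\lesssim m\,c_m^{-\alpha}\ell_\alpha(c_m),\]
applied with $m=2(K+1)a_{n-1}+1$ and $c_m=c\,a_n^{1/\alpha}\ell_\alpha(a_n)$. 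Since $\ell_\alpha(a_n^{1/\alpha}\ell_\alpha(a_n))\sim\ell_\alpha(a_n)$ (trivially for $\alpha<1$, and with logarithms absorbed for $\alpha=1$), this bound reduces to a constant multiple of $a_{n-1}/a_n\sim n^{-2}$, which is summable, and the first Borel--Cantelli lemma gives the claim.

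Next, on $\mathcal{A}_n\cap\mathcal{B}_n$ I would verify that each of the $2K+1$ conditions defining $\mathcal{E}_n$ holds. For $k\notin\{-1,0\}$ there is nothing to do, as $I_{n,k}=\tilde I_{n,k}$ and the interval in $\mathcal{A}_n$ for $S(\tilde I_{n,k})$ is contained in the one prescribed by $\mathcal{E}_n$. For $k\in\{-1,0\}$, write $S(I_{n,k})=S(\tilde I_{n,k})+S(I_{n,k}\setminus\tilde I_{n,k})$ where the second term is nonnegative and bounded above by $\bar S_{(K+1)a_{n-1}}<\tfrac12 a_n^{1/\alpha}\ell_\alpha(a_n)$ on $\mathcal{B}_n$; combining with the two-sided bound on $S(\tilde I_{n,k})$ from $\mathcal{A}_n$ gives $S(I_{n,k})\in(\tfrac12 a_n^{1/\alpha}\ell_\alpha(a_n),2a_n^{1/\alpha}\ell_\alpha(a_n))$ as required. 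Similarly, $M(I_{n,k})\le\max\{M(\tilde I_{n,k}),\bar S_{(K+1)a_{n-1}}\}<\varepsilon a_n^{1/\alpha}\ell_\alpha(a_n)$ since both terms in the maximum are strictly below this threshold on $\mathcal{A}_n\cap\mathcal{B}_n$.

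Combining the last two paragraphs with Lemma \ref{anlem2} finishes the proof: $\mathcal{A}_n\cap\mathcal{B}_n$ occurs infinitely often $\mathbf{P}$-a.s., and it entails $\mathcal{E}_n$. There is no real obstacle here beyond bookkeeping; the only point requiring modest care is the choice of the constant $c$ small enough to simultaneously ensure that the boundary sum stays within the upper $S(I_{n,k})$ bound and below the maximum threshold $\varepsilon a_n^{1/\alpha}\ell_\alpha(a_n)$, and the verification that the Markov bound from Lemma \ref{enlem} applies uniformly in the extra parameter $K$ (it does, since $K$ only enters as a multiplicative constant in $m$).
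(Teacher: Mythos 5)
Your proposal is correct and follows essentially the same route as the paper: the paper likewise introduces the event $\mathcal{B}_n=\{S(-Ka_{n-1},(K+1)a_{n-1})<(\tfrac12\wedge\varepsilon)a_n^{1/\alpha}\ell_\alpha(a_n)\}$, establishes it eventually via the same Laplace-transform/Markov bound and Borel--Cantelli, and checks $\mathcal{A}_n\cap\mathcal{B}_n\subseteq\mathcal{E}_n$ exactly as you do for $k\in\{-1,0\}$. Your only deviations (a slightly larger boundary interval and the constant $\varepsilon/2$ in place of $\varepsilon$) are harmless.
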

\begin{proof}
We start by defining the event, for $n \ge 2$,
\[\mathcal{B}_n :=\left \{ S( -Ka_{n-1}, (K+1)a_{n-1} ) <  \left(\frac{1}{2}\wedge\varepsilon\right) a_n^{1/\alpha} \ell_\alpha(a_n) \right\} ,\]
which we claim holds eventually, $\mathbf{P}$-a.s.  To prove this, recall from the proof of Lemma~\ref{enlem} that, for any $c_n$ such that $ \lim_{n \to \infty}  n c_n^{-\alpha} \ell_\alpha(c_n) = 0$, there exists a constant $c > 0$ such that, as $n \to \infty$, eventually $\mathbf{P}( S_n >  c_n) < c  n c_n^{-\alpha} \ell_\alpha(c_n^{-1})$. Now consider that there is a constant $c > 0$ such that, as $n \to \infty$, eventually
\[  \frac{a_{n-1}  \ell_\alpha(\left(\frac{1}{2}\wedge\varepsilon\right) a_n^{1/\alpha} \ell_\alpha(a_n))  }{ a_n \ell_\alpha(a_n)^\alpha }  <  c  n^{-2} \to 0 , \]
where we use the fact that $a_{n-1}/a_n \sim n^{-2}$. Hence we deduce that there exists a constant $c > 0$ such that, as $n \to \infty$, eventually
\begin{align*}
 &\mathbf{P}\left( S( -Ka_{n-1}, (K+1)a_{n-1} )>  \left(\frac{1}{2}\wedge\varepsilon\right) a_n^{1/\alpha} \ell_\alpha(a_n)  \right) \\
 & \qquad = \mathbf{P} \left( S_{2(K+1) a_{n-1} }  > \left(\frac{1}{2}\wedge\varepsilon\right) a_n^{1/\alpha} \ell_\alpha(a_n)   \right)  < c n^{-2}  ,
 \end{align*}
and by the Borel-Cantelli lemma the claim is proved.

Now, it is a consequence of Lemma \ref{anlem2} and the previous paragraph that $\mathcal{A}_n \cap\mathcal{B}_n$ occurs infinitely often, $\mathbf{P}$-a.s. Thus to complete the proof it will suffice to show that $\mathcal{A}_n \cap\mathcal{B}_n \subseteq \mathcal{E}_n$. However, this is straightforward for, since on $\mathcal{A}_n\cap\mathcal{B}_n$ each $k = -1, 0$ satisfies
\[ M( I_{n, k})  \leq  \max\left\{  M(\tilde{I}_{n, k}),  S( -Ka_{n-1}, (K+1)a_{n-1} )  \right\} < \varepsilon a_n^{1/\alpha} \ell_\alpha(a_n),\]
and
\begin{align*}
   S({I}_{n, k}) \in \left( S(\tilde{I}_{n, k})  ,  S(\tilde{I}_{n, k}) +  S( -Ka_{n-1}, (K+1)a_{n-1} ) \right) \in \left( 1/2 , 2\right) a_n^{1/\alpha} \ell_\alpha(a_n),
\end{align*}
and moreover the conditions on $M(I_{n, k})$ and $S(I_{n, k})$ for each $k\in [-K, K] \setminus \{-1 , 0\}$ are identical in the events $\mathcal{A}_n$ and $\mathcal{E}_n$.
\end{proof}

We now proceed to study the behaviour of the BTM on the event $\mathcal{E}_n$. In particular, our first aim is to show that at time $t_n$ no site in the interval $[a_{n,-K+1},a_{n,K}]$ carries significant mass.

\begin{lmm}\label{nearbound} If $\mathcal{E}_n$ holds, then
\[\sup_{x\in[a_{n,-K+1},a_{n,K}]}P_\sigma\left(X_{t_n}=x\right) < 4\varepsilon.\]
\end{lmm}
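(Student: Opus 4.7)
The key observation is that on $\mathcal{E}_n$ the region $\Lambda := [a_{n,-K},a_{n,K+1})\cap\mathbb{Z}$ has uniformly shallow traps: every $y\in\Lambda$ satisfies $\sigma_y < \varepsilon a_n^{1/\alpha}\ell_\alpha(a_n)$, while $Z_\Lambda := \sum_{y\in\Lambda}\sigma_y \geq \tfrac{2K+1}{2}a_n^{1/\alpha}\ell_\alpha(a_n)$. Consequently, if $\hat X$ denotes the BTM on $\Lambda$ reflected at the endpoints, its invariant distribution $\hat\pi_y = \sigma_y/Z_\Lambda$ satisfies $\hat\pi_y < 2\varepsilon/(2K+1) \leq 2\varepsilon$ for every $y\in\Lambda$. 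Since the target interval $[a_{n,-K+1},a_{n,K}]$ lies well inside $\Lambda$ with a buffer of a full interval $I_{n,\pm K}$ on either side, the sought bound $4\varepsilon$ is essentially twice the uniform invariant bound $2\hat\pi_y$, leaving a factor of two of slack to absorb residual mixing and exit-probability errors.

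The plan is therefore to compare $X$ to $\hat X$ via the standard coupling that forces the two processes to coincide up to the first time $X$ leaves $\Lambda$; writing $\tau$ for this first exit time, this gives
\[ P_\sigma(X_{t_n} = x) \leq \hat P_\sigma(\hat X_{t_n} = x) + P_\sigma(\tau \leq t_n), \]
and each term on the right will be bounded by a small multiple of $\varepsilon$ through heat-kernel techniques.

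For the first term, I would invoke a spectral-gap / ergodic-type estimate. The normalised Dirichlet form of the reflected BTM on $\Lambda$ is $(4Z_\Lambda)^{-1}\sum_{y\sim z}(f(y)-f(z))^2$, i.e.\ $Z_\Lambda^{-1}$ times the SRW Dirichlet form, so the one-dimensional Poincar\'e inequality yields a spectral gap $\lambda_1 \gtrsim (|\Lambda|^2 Z_\Lambda)^{-1} \sim (K^2 t_n)^{-1}$. Standard spectral bounds then give $\hat P_\sigma(\hat X_{t_n} = x) \leq \hat\pi_x + (\hat\pi_x\hat\pi_0)^{-1/2} e^{-\lambda_1 t_n}$, which is sufficient for $K$ of moderate size; for small $K$, where the spectral gap is on the order of $t_n^{-1}$, the walk has nonetheless had ample time to equilibrate \emph{locally} within each of the shorter intervals $I_{n,k}$ (whose individual mixing times are of order $a_n^{1+1/\alpha}\ell_\alpha(a_n)=t_n/12$), and this finer layered structure can be exploited. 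For the second term, a Green's-function computation for the embedded simple random walk killed at $\partial\Lambda$ yields $\mathbf{E}_\sigma[\tau] \gtrsim |\Lambda| Z_\Lambda \sim K^2 t_n/12$, and a Chebyshev-type concentration argument then controls $P_\sigma(\tau \leq t_n)$.

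The principal obstacle is calibrating these estimates simultaneously over all $K\ge 1$ so that the sum fits within the $4\varepsilon$ budget, given that the two error sources have opposite dependence on $K$: small $K$ makes within-$\Lambda$ mixing fast but exit highly likely, whereas large $K$ suppresses exit but worsens mixing. Reconciling this tension uniformly in $K$ is where the paper's explicit appeal to heat-kernel estimates, rather than the more elementary approach of \cite{CM15}, is presumably essential -- one likely needs BTM-tailored density bounds (e.g.\ Nash- or Carne--Varopoulos-type upper bounds exploiting $\hat\pi_{\min}\geq 1/Z_\Lambda$) rather than generic spectral arguments to close the gap.
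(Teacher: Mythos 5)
Your strategy---confine the walk to a box $\Lambda$, bound by the invariant measure of the reflected chain, and control the exit probability---is genuinely different from the paper's, and it has two gaps that I do not believe can be closed within the $4\varepsilon$ budget. First, the exit term $P_\sigma(\tau\le t_n)$ cannot be made $O(\varepsilon)$ for fixed $K$: a lower bound on $E_\sigma[\tau]$ controls $P_\sigma(\tau\ge t)$ from \emph{above} via Markov, which is the wrong direction; to bound $P_\sigma(\tau\le t)$ one instead uses $E_\sigma[\tau]\le t+P_\sigma(\tau\ge t)\sup_y E^y_\sigma[\tau]$, and (as in the paper's proof of Lemma \ref{farbound}) this only yields an exit probability bounded away from $1$ by an absolute constant such as $1/32$ per crossing attempt---the resulting error decays in $K$, not in $\varepsilon$. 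This is precisely why the paper excludes the exit probability from Lemma \ref{nearbound} altogether and accounts for it as the \emph{separate} additive term $b_K$ in Lemma \ref{farbound}, with $b_K\to0$ only as $K\to\infty$. Second, the spectral error term $(\hat\pi_x\hat\pi_0)^{-1/2}e^{-\lambda_1 t_n}$ is vacuous here: the relaxation time of the reflected chain is of order $|\Lambda|\,Z_\Lambda\sim K^2 t_n$, so $e^{-\lambda_1 t_n}=e^{-O(K^{-2})}$ is of constant order, while the prefactor can be as large as $Z_\Lambda\sim K a_n^{1/\alpha}\ell_\alpha(a_n)$ since individual traps may be of order $1$. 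You flag this tension yourself, but the appeal to ``local equilibration within each $I_{n,k}$'' is not an argument, and no calibration over $K$ rescues the estimate.

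The paper's proof sidesteps both issues with a purely local heat-kernel argument requiring no confinement of the walk. Writing $p^\sigma_t(x,y)=\sigma_y^{-1}P^x_\sigma(X_t=y)$ and $V(x,r)=S([x-r+1,x+r-1])$, the event $\mathcal{E}_n$ gives $\tfrac12 a_n^{1/\alpha}\ell_\alpha(a_n)<V(x,a_n)<6a_n^{1/\alpha}\ell_\alpha(a_n)$ for every $x\in[a_{n,-K+1},a_{n,K}]$ (including $x=0$). The resistance--volume on-diagonal bound of \cite[Proposition 4.1]{Kum} gives $p^\sigma_{2a_nV(x,a_n)}(x,x)\le 2/V(x,a_n)$, and since $t_n=12a_n^{1+1/\alpha}\ell_\alpha(a_n)\ge 2a_nV(x,a_n)$ and $t\mapsto p^\sigma_t(x,x)$ is decreasing, this yields $p^\sigma_{t_n}(x,x)<4/(a_n^{1/\alpha}\ell_\alpha(a_n))$. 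The Cauchy--Schwarz inequality $p^\sigma_{t_n}(0,x)\le\sqrt{p^\sigma_{t_n}(0,0)\,p^\sigma_{t_n}(x,x)}$ transfers this to the off-diagonal, and multiplying by $\sigma_x<\varepsilon a_n^{1/\alpha}\ell_\alpha(a_n)$ gives exactly $4\varepsilon$. The two ideas your proposal is missing are this on-diagonal bound from the volume estimate and the Cauchy--Schwarz transfer; together they make the exit-time and mixing difficulties irrelevant for this lemma.
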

\begin{proof}  We first introduce the notation
\[V(x,r):=S([x-r+1,x+r-1])\]
and denote the quenched heat kernel of the Markov chain $X$ by
\begin{equation}\label{hkdef}
p^\sigma_t(x,y)=\sigma_y^{-1}P_\sigma^x\left(X_t=y\right),
\end{equation}
where again we write $P_\sigma^x$ to denote the law of $X$ started from $x$. Throughout the proof we suppose that $\mathcal{E}_n$ holds, and we note that on this event
\begin{equation}\label{measubound}
\frac{1}{2}a_n^{1/\alpha}\ell_\alpha(a_n) <  V(x,a_n) < 6a_n^{1/\alpha}\ell_\alpha(a_n),\qquad \forall x\in [a_{n,-K+1},a_{n,K}].
\end{equation}
Next, in this one-dimensional setting, it is possible to check by applying the argument of \cite[Proposition 4.1]{Kum} that $p^\sigma_{2a_nV(x,a_n)}(x,x)\leq 2/V(x,a_n)$. Hence, since $(p^\sigma_t(x,x))_{t\geq 0}$ is decreasing in $t$,
it follows from (\ref{measubound}) that
\begin{equation}\label{ondiag}
p^\sigma_{t_n}(x,x) < \frac{4}{a_n^{1/\alpha}\ell_\alpha(a_n)},\qquad \forall x\in [a_{n,-K+1},a_{n,K}].
\end{equation}
In conjunction with the Cauchy-Schwarz bound
\[p^\sigma_t(0,x)\leq \sqrt{p^\sigma_t(0,0)p^\sigma_t(x,x)},\qquad \forall x\in\mathbb{Z},\:t>0,\]
and the estimate $\max_{x\in [a_{n,-K+1},a_{n,K}]}\sigma_x < \varepsilon a_n^{1/\alpha}\ell_\alpha(a_n)$ that holds on $\mathcal{E}_n$, \eqref{ondiag} implies the result.
\end{proof}

We now extend the bound of the previous lemma to hold uniformly over the entire integer lattice, by checking that on $\mathcal{E}_n$ the Markov chain $X$ does not exit the interval $[a_{n,-K+1},a_{n,K}]$ quickly.

\begin{lmm}\label{farbound} If $\mathcal{E}_n$ holds, then
\[\sup_{x\in\mathbb{Z}}P_\sigma\left(X_{t_n}=x\right) < 4\varepsilon + b_K,\]
where $(b_k)_{k\geq 1}$ is a deterministic sequence of positive numbers such that $b_k\rightarrow 0$ as $k\rightarrow\infty$.
\end{lmm}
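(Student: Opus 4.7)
The natural decomposition is to split the supremum according to whether $x$ lies in the inner interval $B := [a_{n,-K+1}, a_{n,K}]$. For $x \in B$, Lemma \ref{nearbound} already yields $P_\sigma(X_{t_n} = x) < 4\varepsilon$ on $\mathcal{E}_n$. For $x \notin B$, since $X$ is nearest-neighbour and $X_0 = 0 \in B$, the event $\{X_{t_n} = x\}$ forces the chain to have exited $B$ by time $t_n$; writing $\tau := \tau_{a_{n,-K+1}} \wedge \tau_{a_{n,K}}$ for the first exit time from $B$, this gives $P_\sigma(X_{t_n} = x) \le P_\sigma^0(\tau \le t_n)$. Combining, the claim reduces to producing a deterministic sequence $b_K$ with $b_K \to 0$ as $K \to \infty$ and $P_\sigma^0(\tau \le t_n) \le b_K$ on $\mathcal{E}_n$.

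\textbf{Bounding the exit probability.} The quantitative input is the volume control afforded by $\mathcal{E}_n$: since $S(I_{n,k}) \ge \frac{1}{2} a_n^{1/\alpha} \ell_\alpha(a_n)$ for each $k \in [-K, K]$, summing over $k$ yields $V(0, K a_n) \ge K a_n^{1/\alpha} \ell_\alpha(a_n)$, and hence $K a_n \cdot V(0, K a_n) \ge K^2 t_n / 12$. A Green's function computation for simple random walk on $B$ killed at its endpoints then gives $E_\sigma^0[\tau] \gtrsim K t_n$. Converting this first-moment estimate into the tail bound $P_\sigma^0(\tau \le t_n) \le b_K$ can be carried out by invoking an exit time estimate of the form
\[
P_\sigma^0(\tau \le t) \le \frac{C t}{r\, V(0, r)}
\]
valid in the regime $t \le r V(0, r)$---a standard consequence of the on-diagonal heat kernel bounds of \cite[Proposition 4.1]{Kum} already used in Lemma \ref{nearbound}, or obtainable by a Doob-type maximal inequality in this one-dimensional setting. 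Applied with $r = K a_n$, this produces $b_K = O(1/K^2)$.

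\textbf{Main obstacle.} The principal difficulty is the conversion from the first-moment bound $E_\sigma^0[\tau] \gtrsim K t_n$ to the tail bound on $P_\sigma^0(\tau \le t_n)$: Markov's inequality controls only $P(\tau \ge t)$, and a naive second-moment approach yields only a constant rather than something decaying in $K$. The resolution is to exploit the two-sided control on $V(0, K a_n)$ (and on each $M(I_{n,k})$) provided by $\mathcal{E}_n$, and to feed these into the exit-time / heat-kernel machinery already implicit in the paper's treatment of Lemma \ref{nearbound}; the delicate point is ensuring that the resulting constants are uniform in $n$ and in $\sigma \in \mathcal{E}_n$.
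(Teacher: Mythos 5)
Your decomposition into the inner interval (handled by Lemma \ref{nearbound}) and the exit event is exactly the paper's, and the reduction of the lemma to a bound of the form $P_\sigma^0(\tau \le t_n) \le b_K$ with $b_K \to 0$ is correct. The gap is precisely at the step you yourself flag as the main obstacle: the estimate $P_\sigma^0(\tau(0,r)\le t)\le Ct/(rV(0,r))$ is not a standard consequence of anything you cite, and as a general statement it is false. The on-diagonal bound of \cite[Proposition 4.1]{Kum} controls return probabilities, not the lower tail of exit times; and the natural martingale for this chain is $X_t^2 - \int_0^t \sigma_{X_s}^{-1}\,ds$, so a Doob/optional-stopping argument yields only $P^0_\sigma(\tau(0,r)\le t)\le t/(r^2\min_{|z|<r}\sigma_z)$, in which the volume $V(0,r)$ has effectively been replaced by $r\min_{|z|<r}\sigma_z$ --- far too weak for heavy-tailed traps. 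Worse, the inequality you want cannot hold for arbitrary landscapes: if $\sigma_1=H$ is enormous and all other traps in $(-r,r)$ are of order one, then with probability of order $1/r$ the walk exits through $-r$ without ever visiting site $1$, so $P^0_\sigma(\tau(0,r)\le Cr^2)\ge c/r$, while $Ct/(rV(0,r))\approx Cr/H$ can be made arbitrarily smaller by increasing $H$. Any correct argument must therefore genuinely exploit the regularity of the landscape at scale $a_n$ encoded in $\mathcal{E}_n$ (all block sums comparable, no dominant trap), not merely the aggregate value of $V(0,Ka_n)$.

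The paper supplies the missing ingredient with a chaining argument. To exit $[-(K-1)a_n,(K-1)a_n]$ the walk must make $\lfloor(K-1)/2\rfloor$ successive crossings of blocks of width $2a_n$ centred at points of $a_n\mathbb{Z}$. On $\mathcal{E}_n$, explicit Green's-function formulas give, for every such centre $x$ and every starting point $y$ in the block, $E^x_\sigma\tau(x,2a_n) > a_n^{1+1/\alpha}\ell_\alpha(a_n)$ and $E^y_\sigma\tau(x,2a_n) < 16\,a_n^{1+1/\alpha}\ell_\alpha(a_n)$; the Markov property converts this two-sided first-moment control into the uniform lower bound $P^x_\sigma(\tau(x,2a_n)\ge t_n/24) > 1/32$. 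Since at most $24$ of the crossings can take time less than $t_n/24$ if the total is to be at most $t_n$, the exit probability is at most $b_K:=\mathbb{P}(\mathrm{Bin}(\lfloor(K-1)/2\rfloor,1/32)\le 24)\to 0$. The decay in $K$ thus comes from binomial concentration over many independent block crossings, not from a single application of a maximal inequality at scale $Ka_n$; this iteration is the idea your proposal is missing.
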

\begin{proof}
First we introduce the two-sided hitting time
\[\tau(x,r):=\inf\{t\geq 0:\:|X_t-x|\geq r\} ,\]
and note that, in light of Lemma \ref{nearbound}, it suffices to show that, on $\mathcal{E}_n$,
\begin{align}
\label{eqK}
P_\sigma\left(\tau(0,(K-1)a_n)\leq t_n\right) \leq   b_K ,
\end{align}
where $(b_k)_{k\geq 1}$ is a deterministic sequence of positive numbers such that $b_k \to 0$.

To bound the probability at \eqref{eqK}, set $\tilde{t}_n:= t_n/24 = \frac{1}{2}a_n^{1+1/\alpha}\ell_\alpha(a_n)$, and write $\tau_0=0$ and \[\tau_{i+1}:=\inf\left\{t\geq \tau_i:\:X_t\in 2a_n\mathbb{Z}\backslash\{X_{\tau_i}\}\right\}.\]
Then we may bound \eqref{eqK} as follows:
\begin{eqnarray}
P_\sigma\left(\tau(0,(K-1)a_n)\leq t_n\right)&\leq &P_\sigma\left(\sum_{i=0}^{\lfloor (K-1)/2\rfloor-1}\tau_i\leq t_n\right)\nonumber\\
&\leq& P_\sigma\left(\sum_{i=0}^{\lfloor (K-1)/2\rfloor-1}\mathbf{1}_{\{\tau_i\geq \tilde{t}_n\}}\leq24\right) .\label{ub111}
\end{eqnarray}
Writing $P^x_\sigma$ for the law of $X$ started from $x$, it now suffices to show that on~$\mathcal{E}_n$, for every $x\in[a_{n,-K+2},a_{n,K-1}]\cap a_n\mathbb{Z}$,
\begin{align}
\label{eqK2}
P^x_\sigma\left(\tau(x,2a_n)\geq \tilde{t}_n \right) > 1/32 .
\end{align}
Indeed, \eqref{ub111} is then bounded above by $b_K$, as defined by setting
\[ b_k := \mathbb{P}\left(\mathrm{Bin}\left(\lfloor (k-1)/2\rfloor,1 /32 \right)\leq 24\right), \]
where $\mathrm{Bin}(N,p)$ a binomial random variable with parameters $N$ and $p$, which is a choice that clearly yields $(b_k)_{k\geq1}$ is a null sequence.

To establish \eqref{eqK2}, first let $E^x_\sigma$ denote the expectation corresponding to the law $P^x_\sigma$. By considering the number of visits to each vertex by the jump process of $X$, we have that: for every $x\in\mathbb{Z}$, $r\in\mathbb{N}$, $y\in(x-r,x+r)$,
\[E^y_\sigma(\tau(x,r))=\sum_{i=x-r+1}^{x+r-1}P^y_\sigma(\tau_i<\tau(x,r))\times P^i_\sigma(\tau_i^+>\tau(x,r))^{-1} \times \sigma_i,\]
where we again write $\tau_i$ to represent the first hitting time by $X$ of $i$, and write $\tau_i^+:= \inf \{s \ge \inf\{t:\:X_t\neq i\} : X_s = i \}$ to represent the first return time. Now, it is an elementary computation to check that
\[P^y_\sigma(\tau_i<\tau(x,r))=\min\left\{\frac{x+r-y}{x+r-i},\frac{y-x+r}{i-x+r}\right\},\]
\[P^i_\sigma(\tau_i^+>\tau(x,r))=\frac{1}{2}\left(\frac{1}{i-x+r}+\frac{1}{x+r-i}\right),\]
and so we obtain:
\[E^y_\sigma(\tau(x,r))=2\sum_{i=x-r+1}^{x+r-1}\min\left\{\frac{x+r-y}{x+r-i},\frac{y-x+r}{i-x+r}\right\}\times
\left(\frac{1}{i-x+r}+\frac{1}{x+r-i}\right)^{-1} \sigma_i.\]
By the definition of $\mathcal{E}_n$ we readily deduce from this bound that
\[E^y_\sigma\left(\tau(x,2a_n) \right) < 16a_n^{1+1/\alpha}\ell_\alpha(a_n)\]
and
\[E^x_\sigma\left(\tau(x,2a_n) \right) > a_n^{1+1/\alpha}\ell_\alpha(a_n)\]
for every $x\in[a_{n,-K+2},a_{n,K-1}]\cap a_n\mathbb{Z}$, $y\in (x-2a_n,x+2a_n)$. Thus, applying the Markov property at time $t$, we deduce
\begin{eqnarray*}
 a_n^{1+1/\alpha}\ell_\alpha(a_n)& < & E^x_\sigma\left(\tau(x,2a_n) \right)\\
 &\leq& t+ P^x_\sigma\left(\tau(x,2a_n)\geq t \right)16a_n^{1+1/\alpha}\ell_\alpha(a_n)
\end{eqnarray*}
for every $x\in[a_{n,-K+2},a_{n,K-1}]\cap a_n\mathbb{Z}$. Setting $t=\tilde{t}_n$ completes the proof. \end{proof}

Putting together Lemmas \ref{enlem2} and \ref{farbound}, recalling that  $\varepsilon \in (0, 1)$ and $K \in \mathbb{N}$ were arbitrary, and noting that $t_n\rightarrow\infty$, we have thus established the $\liminf$ part of Theorem~\ref{mainthm}. Furthermore, note that on $\mathcal{E}_n$ we have that
\[\frac{M_{a_n}}{S_{a_n}} < 2\varepsilon.\]
Hence we also deduce from Lemma \ref{enlem2} the $\liminf$ part of Theorem \ref{summax}.

\section{Delocalisation for traps with finite expectation}\label{sec:bm}

In this section, we deal with the case when $\mathbf{E}\sigma_0<\infty$, and in particular establish~(\ref{bmdeloc}). In this case, it is known that, for $\mathbf{P}$-a.e.\ realisation of the trapping environment, we have that
\begin{equation}\label{path}
(\varepsilon X_{t/\varepsilon^2})_{t\geq 0}\rightarrow (B_t)_{t\geq 0}
\end{equation}
in distribution in $D([0,\infty),\mathbb{R})$, where up to a deterministic linear time change $B=(B_t)_{t\geq 0}$ is a standard one-dimensional Brownian motion \cite[Theorem 2.10]{BCCR}. Moreover, by the strong law of large numbers we $\mathbf{P}$-a.s.\ have that $\varepsilon\sum_{x\in\mathbb{Z}}\sigma_{x}\delta_{\varepsilon x}\rightarrow \mathbf{E}(\sigma_0)\lambda$ vaguely, where $\delta_x$ is the probability measure placing all its mass at $x$, and $\lambda$ is Lebesgue measure on $\mathbb{R}$. It follows that we can apply the local limit theorem of \cite[Theorem 1]{CH} to deduce that, $\mathbf{P}$-a.s.,
\[\lim_{t\rightarrow\infty}\max_{x\in\mathbb{Z}:\:|x|\leq R\sqrt{t}}\sqrt{t}\left|p^\sigma_t(0,x)-q(x/\sqrt{t})\right|= 0,\]
where $p^\sigma$ is the discrete heat kernel defined as at \eqref{hkdef}, and $q$ is the density of $B_1$ with respect to $\mathbf{E}(\sigma_0)\lambda$. (For this, it is useful to note that in this setting the Euclidean metric coincides with the resistance metric, where we consider $\mathbb{Z}$ as an electrical network with unit conductances between nearest neighbours. This means we can immediately apply \cite[Proposition 14]{CH} to check the equicontinuity of the discrete heat kernels under the relevant scaling.) In particular, we obtain that, $\mathbf{P}$-a.s., for large $t$,
\[\max_{x\in\mathbb{Z}:\:|x|\leq R\sqrt{t}}P_\sigma\left(X_t=x\right)\leq 2\sup_{x\in\mathbb{R}:\:|x|\leq R}q(x)\max_{x\in\mathbb{Z}:\:|x|\leq R\sqrt{t}}\frac{\sigma_x}{\sqrt{t}}\]
Applying the strong law of large numbers again, one can readily check that the upper bound here converges to 0 for any $R<\infty$, $\mathbf{P}$-a.s. From (\ref{path}), we also deduce that, $\mathbf{P}$-a.s.,
\[\sup_{x\in\mathbb{Z}:\:|x|> R\sqrt{t}}P_\sigma\left(X_t=x\right)\leq P_\sigma\left(\tau(0,R\sqrt{t})<t\right)\rightarrow\mathbb{P}\left(\tau^B(0,R)<1\right),\]
where $\tau^B(0,R)$ is the exit time of $B$ from $(-R,R)$. Hence we find that
\[\limsup_{t\rightarrow\infty}\sup_{x\in\mathbb{Z}}P_\sigma\left(X_t=x\right)\leq \mathbb{P}\left(\tau^B(0,R)<1\right)\]
for any $R<\infty$, $\mathbf{P}$-a.s. Since this bound can be made arbitrarily small by taking $R$ large, we are done.

\section*{Acknowledgements}

This article was completed whilst the first author was a Visiting Associate Professor at Kyoto University, Research Institute for Mathematical Sciences. He would like to thank Takashi Kumagai and Ryoki Fukushima for their kind and generous hospitality.

\providecommand{\bysame}{\leavevmode\hbox to3em{\hrulefill}\thinspace}
\providecommand{\MR}{\relax\ifhmode\unskip\space\fi MR }
\providecommand{\MRhref}[2]{%
  \href{http://www.ams.org/mathscinet-getitem?mr=#1}{#2}
}
\providecommand{\href}[2]{#2}

\end{document}